  \pgfplotsset{%
    compat        = 1.17,
    table/col sep = comma}
  \tikzset{external/system call = {%
    pdflatex \tikzexternalcheckshellescape
      -halt-on-error
      -interaction=batchmode
      -jobname "\image" "\texsource"}}
\newcommand{\mbf}[1]{\boldsymbol{#1}}
\newcommand*{\bmat}[1]{\begin{bmatrix}#1\end{bmatrix}}
\newcommand*{\sbmat}[1]{\bigl[ \begin{smallmatrix}#1\end{smallmatrix} \bigr ]}
\newcommand{\dataSet}{\ensuremath{\mathbb{D}}}
\newcommand{\C}{\ensuremath{\mathbb{C}}}
\newcommand{\R}{\ensuremath{\mathbb{R}}}
\newcommand{\N}{\ensuremath{\mathbb{N}}}
\newcommand{\trans}{\ensuremath{\mkern-1.5mu\mathsf{T}}}
\DeclareMathOperator{\mdiag}{diag}
\DeclareMathOperator*{\argmax}{argmax}
\DeclareMathOperator*{\argmin}{argmin}
\newcommand{\bA}{\ensuremath{\boldsymbol{A}}}
\newcommand{\bC}{\ensuremath{\boldsymbol{C}}}
\newcommand{\bE}{\ensuremath{\boldsymbol{E}}}
\newcommand{\bb}{\ensuremath{\boldsymbol{b}}}
\newcommand{\bc}{\ensuremath{\boldsymbol{c}}}
\newcommand{\be}{\ensuremath{\boldsymbol{e}}}
\newcommand{\bv}{\ensuremath{\boldsymbol{v}}}
\newcommand{\bx}{\ensuremath{\boldsymbol{x}}}
\newcommand{\bz}{\ensuremath{\boldsymbol{z}}}
\newcommand*{\w}{\ensuremath{\boldsymbol{w}}}
\newcommand{\nBasis}{\ensuremath{\boldsymbol{p}}}
\newcommand{\dBasis}{\ensuremath{\boldsymbol{q}}}
\newcommand{\AAAiter}[1][k]{^{(#1)}}
\newcommand{\skwfIter}[1][p]{_{(#1)}}
\newcommand{\LowMat}{\left(\FunValsMat\CauchyMat - \CauchyMat\interpFunVals\right)}
\newcommand{\WFCoefMat}{\boldsymbol{F}\skwfIter[p-1]}
\newcommand{\SKWFWeightMat}{\mbf D\skwfIter[p-1]}
\newcommand{\WFRHS}{\mbf b\skwfIter[p-1]}
\newcommand{\CauchyMat}{\bC}
\newcommand{\interpFunVals}{\boldsymbol{H}}
\newcommand{\FunValsMat}{\mbf G}
\newcommand{\zeros}{\ensuremath{\boldsymbol{0}}}
\theoremstyle{plain}\newtheorem{theorem}{Theorem}
\theoremstyle{plain}\newtheorem{lemma}{Lemma}
\theoremstyle{plain}
\theoremstyle{definition}\newtheorem{remark}{Remark}
\newcommand{\diff}{\ensuremath{\operatorname{d}}}
\newcommand{\drdw}{\frac{\partial r(z;\w)}{\partial \w}}
\newcommand{\dLevydw}{\frac{\operatorname{d}\ELevy}{\operatorname{d}\w}}
\newcommand{\dSKdw}{\frac{\operatorname{d}\Esk}{\operatorname{d}\w}}
\newcommand{\dWFdw}{\frac{\operatorname{d}\Ewf}{\operatorname{d}\w}}
\newcommand{\dEdw}{\frac{\operatorname{d}\ELtwo}{\operatorname{d}\w}}
\newcommand{\triWave}{\textrm{triWave}}
\newcommand{\relu}{\textrm{relu}}
\newcommand{\maxOrder}{\ensuremath{k_{\max}}}
\newcommand{\maxOrderSK}{\ensuremath{p_{\max}}}
\newcommand{\imunit}{\ensuremath{\mathfrak{i}}}
\newcommand{\ltwo}{\ensuremath{\ell_{2}}}
\newcommand{\linf}{\ensuremath{\ell_{\infty}}}
\newcommand{\AAA}{\texttt{AAA}}
\newcommand{\SK}{\texttt{SK}}
\newcommand{\WF}{\texttt{WF}}
\newcommand{\AAAA}{\texttt{NL-AAA}}
\newcommand{\ELtwo}{\ensuremath{E}}
\newcommand{\ELevy}{\ensuremath{E_{\mathtt{Levy}}}}
\newcommand{\Esk}{\ensuremath{E_{\mathtt{SK}}}}
\newcommand{\Ewf}{\ensuremath{E_{\mathtt{WF}}}}
\newcommand{\plotfontsize}{\small}
\newcommand{%
  \tikzexternalenable%
  \tikzsetnextfilename{}%
  \input{graphicsPaper/.tikz}%
  \tikzexternaldisable%
}[1]{%
  \tikzexternalenable%
  \tikzsetnextfilename{#1}%
  \input{graphicsPaper/#1.tikz}%
  \tikzexternaldisable%
}
\definecolor{matlabblue}{HTML}{0072BD}
\definecolor{matlaborange}{HTML}{D95319}
\definecolor{matlabyellow}{HTML}{EDB120}
\definecolor{matlabpurple}{HTML}{7E2F8E}
\definecolor{matlabgreen}{HTML}{77AC30}
\definecolor{matlablightblue}{HTML}{4DBEEE}
\definecolor{matlabred}{HTML}{A2142F}
\colorlet{trueDataColor}{black}
\colorlet{AAAColor}{matlabblue}
\colorlet{AAAAColor}{matlaborange}
\tikzstyle{trueData} = [
\tikzstyle{AAAConverge} = [
\tikzstyle{AAAAConverge} = [
\tikzstyle{AAAResponse} = [
\tikzstyle{AAAAResponse} = [
\begin{document}

%%%%%%%%%%%%%%%%%%%%%%%%%%%%%%%%%%%%%%%%%%%%%%%%%%%%%%%%%%%%%%%%%%%%%%%%%%%%%%%%
% PAPER INFORMATION.                                                           %
%%%%%%%%%%%%%%%%%%%%%%%%%%%%%%%%%%%%%%%%%%%%%%%%%%%%%%%%%%%%%%%%%%%%%%%%%%%%%%%%

\title{A refined nonlinear least-squares method for the rational approximation
problem}

\author[$\ast$]{Michael S. Ackermann}
\affil[$\ast$]{Department of Mathematics, Virginia Tech, Blacksburg,
  VA 24061, USA.\authorcr
  \email{amike98@vt.edu}, \orcid{0000-0003-3581-6299}}

\author[$\ast$]{Linus Balicki}
\affil[$\dagger$]{
  Department of Mathematics, Virginia Tech, Blacksburg,
  VA 24061, USA.\authorcr
  \email{balicki@vt.edu}, \orcid{0000-0002-8901-2889}
}

\author[$\ddagger$]{\authorcr{Serkan Gugercin}}
\affil[$\ddagger$]{%
  Department of Mathematics and Division of Computational Modeling and Data
  Analytics, Academy of Data Science, Virginia Tech,
  Blacksburg, VA 24061, USA.\authorcr
  \email{gugercin@vt.edu}, \orcid{0000-0003-4564-5999}
}

\author[$\S$]{Steffen W. R. Werner}
\affil[$\S$]{%
  Department of Mathematics, Division of Computational Modeling and
  Data Analytics, and National Security Institute, Virginia Tech,
  Blacksburg, VA 24061, USA.\authorcr
  \email{steffen.werner@vt.edu}, \orcid{0000-0003-1667-4862}
}

\shorttitle{Nonlinear least-squares AAA algorithm}
\shortauthor{Ackermann, Balicki, Gugercin, Werner}
\shortdate{2026-01-27}
\shortinstitute{}

\keywords{%
  data-driven modeling,
  reduced-order modeling,
  rational functions,
  barycentric forms,
  nonlinear optimization
}

\msc{%
  41A20, % Approximation by rational functions
  65D15, % Algorithms for approximation of functions
  93B15, % Realizations from input-output data
  93C05, % Linear systems
  93C80  % Frequency-response methods
}

\abstract{%
The adaptive Antoulas-Anderson (AAA) algorithm for rational approximation is a
widely used method for the efficient construction of highly accurate rational
approximations to given data.
While AAA can often produce rational approximations accurate to any
prescribed tolerance, these approximations may have degrees larger than
what is actually required to meet the given tolerance.
In this work, we consider the adaptive construction of interpolating rational
approximations while aiming for the smallest feasible degree to satisfy a
given error tolerance.
To this end, we introduce refinement approaches to the linear least-squares
step of the classical AAA algorithm that aim to minimize the true nonlinear
least-squares error with respect to the given data.
Furthermore, we theoretically analyze the derived approaches in terms of the
corresponding gradients from the resulting minimization problems and use these insights
to propose a new greedy framework that ensures monotonic error convergence.
Numerical examples from function approximation and model order reduction verify
the effectiveness of the proposed algorithm to construct accurate rational
approximations of small degrees.
}

\novelty{%
We develop a new greedy-type adaptive approximation method for the
construction of rational approximations from given data.
The proposed method selects in every step a suitable interpolation point
to minimize an error measure and efficiently solves a nonlinear least-squares
problem on the rest of the data.
We provide a theoretical analysis of the minimization approaches in terms of
their gradients.
}

\maketitle

%%%%%%%%%%%%%%%%%%%%%%%%%%%%%%%%%%%%%%%%%%%%%%%%%%%%%%%%%%%%%%%%%%%%%%%%%%%%%%%%
% PAPER CONTENT.                                                               %
%%%%%%%%%%%%%%%%%%%%%%%%%%%%%%%%%%%%%%%%%%%%%%%%%%%%%%%%%%%%%%%%%%%%%%%%%%%%%%%%

\section{Introduction}%
\label{sec:intro}

Approximating complex functions by rational functions is an important
task in the computational sciences.
A recent popular approach for this purpose is the adaptive Antoulas-Anderson (\AAA)
algorithm~\cite{NakST18}, which had wide ranging success in many fields due to
its ability to achieve highly accurate approximations quickly;
see~\cite{NakT25} for a survey of the method.
In certain applications, it is desired to construct highly accurate
rational functions that have a low degree.
One of these applications is model order reduction in the frequency
domain~\cite{Ant05, AntBG20, BenSGetal21}.
Here, the input-to-output behavior of a dynamical system can be described via
a rational function, the so-called transfer function.
The degree of that rational function is also the order of the corresponding
dynamical system.
The task in model order reduction is then to find a rational approximation to
the transfer function that is accurate in regions of interest while having the
smallest possible degree to ensure fast evaluations.

Data-driven reduced-order modeling has become common practice. 
In this paper, we focus on data-driven reduced-order modeling  
grounded in rational approximation theory. 
For an incomplete list of such methods,
see, e.g.,~\cite{AntA86, MayA07, GusS99, DrmGB15, BerG15, BerG17, AntBG20, GosGB22, DirMH23}
and the references therein.
Many of these works satisfy optimality conditions, provide error bounds, or
offer specialized nonlinear optimization techniques, all with the goal of
providing the most accuracy with a low degree rational function.
Our focus in this paper is on the \AAA{} algorithm and our proposed method,
\AAAA{}, which we demonstrate can provide reliably better accuracy at a lower order
than \AAA{}.

While the \AAA{} method is an adaptive tool that allows the construction of rational approximations to a certain error tolerance, in some cases the error convergence can be unpredictable which might lead to an unnecessarily high degree rational approximant.
Consider as example two functions:
the absolute value $\lvert x \rvert$ and
the rectified linear unit (ReLU) with $\relu(x) = \max(x, 0)$, both in the
interval $[-1, 1]$.
We use $501$ linearly equidistant evaluations of each function to compute
rational approximations.
The results of this experiment are shown in \Cref{fig:IntroExs} in terms of
the normalized $\ltwo$ error vs the degree of the rational functions;
see \Cref{sec:setup} for the definition of the error measure.
While \AAA{} performs very well on the absolute value, it barely converges 
for the ReLU function.
In contrast, we propose in this work the \AAAA{} algorithm, which performs similarly well
compared to \AAA{} for the absolute value and completely outperforms \AAA{}
for ReLU.
Thereby, we showcase that there are more suitable methods for applications in
which low-degree rational functions are needed.
Additionally, we note that in contrast to the classical \AAA{}, the error
behavior of the proposed \AAAA{} method is monotonic.

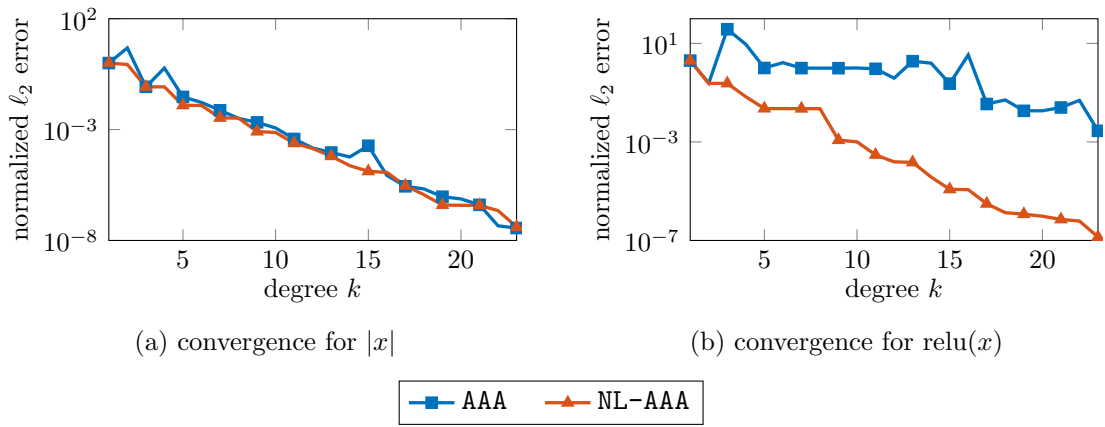
\begin{figure}[t]
  \centering
  \begin{subfigure}[b]{.49\linewidth}
    \centering
  \tikzexternalenable%
  \tikzsetnextfilename{absX}%
  \begin{tikzpicture}[font = \plotfontsize]
  \pgfplotstableread{graphicsPaper/data/introEx_h.csv}\tableERR

  \begin{semilogyaxis}[
    scale only axis,
    width              = .73\linewidth,
    height             = .4\linewidth,
    xmin               = 1,
    xmax               = 23,
    ymin               = 1e-8,
    ymax               = 1e+2,
    xminorticks        = false,
    yminorticks        = true,
    scaled x ticks     = false,
    xlabel             = {degree $k$},
    ylabel             = {normalized $\ltwo$ error},
    xlabel style       = {yshift = .3em},
    ylabel style       = {yshift = -.3em},
    x tick label style = {/pgf/number format/1000 sep={\,}},
    y tick label style = {/pgf/number format/1000 sep={\,}}
  ]
    \addplot[AAAConverge] table[x = iter, y = AAA]{\tableERR};
    \addplot[AAAAConverge] table[x = iter, y = AAAA]{\tableERR};
  \end{semilogyaxis}
\end{tikzpicture}%
  \tikzexternaldisable%

    \caption{convergence for $\lvert x \rvert$}
    \label{fig:absXEx}
  \end{subfigure}%
  \hfill%
  \begin{subfigure}[b]{.49\linewidth}
    \centering
  \tikzexternalenable%
  \tikzsetnextfilename{reluX}%
  \begin{tikzpicture}[font = \plotfontsize]
  \pgfplotstableread{graphicsPaper/data/introEx_g.csv}\tableERR

  \begin{semilogyaxis}[
    scale only axis,
    width              = .73\linewidth,
    height             = .4\linewidth,
    xmin               = 1,
    xmax               = 23,
    ymin               = 1e-7,
    ymax               = 1e+2,
    xminorticks        = false,
    yminorticks        = true,
    scaled x ticks     = false,
    xlabel             = {degree $k$},
    ylabel             = {normalized $\ltwo$ error},
    xlabel style       = {yshift = .3em},
    ylabel style       = {yshift = -.3em},
    x tick label style = {/pgf/number format/1000 sep={\,}},
    y tick label style = {/pgf/number format/1000 sep={\,}}
  ]
    \addplot[AAAConverge] table[x = iter, y = AAA]{\tableERR};
    \addplot[AAAAConverge] table[x = iter, y = AAAA]{\tableERR};
  \end{semilogyaxis}
\end{tikzpicture}%
  \tikzexternaldisable%

    \caption{convergence for $\relu(x)$}
    \label{fig:ReluXEx}
  \end{subfigure}
  
  \vspace{.5\baselineskip}
  \tikzexternalenable%
  \tikzsetnextfilename{legendConvIntro}%
  \begin{tikzpicture}
  \begin{axis}[%
    hide axis,
    scale only axis,
    width  = 1cm,
    height = 1cm,
    xmin   = 0,
    xmax   = 1,
    ymin   = 0,
    ymax   = 1,
    legend columns    = -1,
    legend cell align = {left},
    legend style      = {
      at     = {(0,0)},
      anchor = center,
      /tikz/every even column/.append style = {column sep = 0.4cm}}
  ]
  
    \addlegendimage{AAAConverge} coordinates {(0, 0)};
    \addlegendentry{\AAA}

    \addlegendimage{AAAAConverge}
    \addlegendentry{\AAAA}
  \end{axis}
\end{tikzpicture}%
  \tikzexternaldisable%

  \caption{Convergence behavior of the classical \AAA{} algorithm and the newly
    proposed \AAAA{} method on two test functions:
    In both cases, \AAAA{} decreases the error monotonically.
    While for the absolute value, both methods show a very similar performance,
    for the ReLU function, the proposed method obtains smaller approximation
    errors significantly faster than the classical \AAA{} approach.}
  \label{fig:IntroExs}
\end{figure}

In this work, the task at hand is the construction of rational approximations
$r(z)$ for given data sets of the form $\{(z_{i}, H(z_{i}))\}_{i = 1}^{M}$,
where $z_{i} \in \C$ are the evaluation points and $H(z_{i})$ the corresponding
function evaluations.
Thereby, we aim to reach a prescribed level of accuracy with $r(z)$ with respect
to the given data, while keeping the degree of $r(z)$ as low as possible.
To this end, we propose modifications to the classical \AAA{} algorithm by
solving the true rational least-squares problem in every step using the
Sanathanan-Koerner iteration~\cite{SanK63} and Whitfield's
iteration~\cite{Whi87}.
By deriving the gradients of these different methods in the Wirtinger calculus,
we can show that our proposed \AAAA{} algorithm cheaply solves the
rational least-squares problem of interest in every step rather than the
approximation done by \AAA{}.
Furthermore, our analysis allows us to characterize the performance of the
classical \AAA{} and ensures that our proposed method converges monotonically
with increasing degree of the approximation.

The remainder of this work is organized as follows.
In \Cref{sec:preliminaries}, we review the model order reduction problem and
the fundamentals of \AAA{}, reformulated for the purpose of this paper.
In \Cref{sec:IterativeL2Solve}, we review two iterative methods for solving the
rational least-squares problem, namely the Sanathanan-Koerner iteration and
Whitfield's iteration, and formulate these for the barycentric form.
We then present our proposed algorithm, \AAAA{}, in \Cref{sec:AAAA} and analyze
the different refinement approaches for solving the rational least-squares
problem in terms of their gradients.
Numerical experiments presented in \Cref{sec:numerics} are used to compare the
classical \AAA{} approach and the proposed \AAAA{} method on function
approximation as well as model order reduction examples.
The paper is concluded in \Cref{sec:conclusions}.

%%%%%%%%%%%%%%%%%%%%%%%%%%%%%%%%%%%%%%%%%%%%%%%%%%%%%%%%%%%%%%%%%%%%%%%%%%%%%%%%
% PRELIMS.                                                                     %
%%%%%%%%%%%%%%%%%%%%%%%%%%%%%%%%%%%%%%%%%%%%%%%%%%%%%%%%%%%%%%%%%%%%%%%%%%%%%%%%

\section{Mathematical preliminaries}
\label{sec:preliminaries}

In this section, we give a brief introduction to the model order reduction
problem and its connection to rational approximation.
After introducing the proper interpolatory barycentric form, we review the
classical \AAA{} algorithm for rational approximation from~\cite{NakST18} 
as well as the idea of the Levy approximation~\cite{Lev59} to the rational
nonlinear least-squares problem, which is an essential component in \AAA{}.

%%%%%%%%%%%%%%%%%%%%%%%%%%%%%%%%%%%%%%%%%%%%%%%%%%%%%%%%%%%%%%%%%%%%%%%%%%%%%%%%

\subsection{Model order reduction via rational approximation}%
\label{sec:mor}

Linear time-invariant dynamical systems can be written as systems of
differential and algebraic equations of the form
\begin{subequations} \label{eqn:sys}
\begin{align}
  \bE \dot{\bx}(t) & = \bA \bx(t) + \bb u(t)\\
    y(t) & = \bc^{\trans} \bx(t),
\end{align}
\end{subequations}
where $\bA,\bE \in \C^{N \times N}$ and
$\bb, \bc \in \C^{N}$ are the system matrices,
$\bx\colon \R \to \C^{N}$ is the internal system state,
$u(t)\colon \R \to \C$ is the input to the system, and
$y(t)\colon \R \to \C$ is the external output of the system.
We assume that the matrix pencil $\lambda \bE - \bA$ has at most one infinite
eigenvalue.
Using the Laplace transform, the system~\cref{eqn:sys} can equivalently
be described in the frequency (Laplace) domain via its corresponding transfer
function
\begin{equation} \label{eqn:tf}
  H(z) = \bc^{\trans} (z \bE - \bA)^{-1} \bb.
\end{equation}
The function $H: \C \to \C$ is a degree-$N$ proper rational function, which
yields a direct relation between the system input and output.

The main computational costs for evaluating~\cref{eqn:sys} in applications
stems from the number of differential equations and the corresponding
dimension of the state vector $N$.
In many applications, which demand for high modeling accuracy, the dimension
$N$ is large ($N \in \mathcal{O}(10^{6})$).
Consequently, the evaluation of~\cref{eqn:sys} is associated with high
computational costs in terms of computation time and memory demands.
To mitigate these issues, model order reduction aims to
construct cheap-to-evaluate surrogate models of the same form
\begin{subequations} \label{eqn:rom}
\begin{align}
  \widehat{\bE} \dot{\hat{\bx}}(t) & = \widehat{\bA} \hat{\bx}(t) +
    \hat{\bb} u(t)\\
  \hat{y}(t) & = \hat{\bc}^{\trans} \hat{\bx}(t),
\end{align}
\end{subequations}
where $\widehat{\bE}, \widehat{\bA} \in \C^{k \times k}$,
$\hat{\bb}, \hat{\bc} \in \C^{k}$, and with the internal dimension
$k \ll N$.
The corresponding transfer function is then given as
\begin{equation} \label{eqn:rom_tf}
  r(z) = \hat{\bc}^{\trans} (z \widehat{\bE} - \widehat{\bA})^{-1}
    \hat{\bb}.
\end{equation}
These surrogate models are aimed to accurately approximate the original
system's input-to-output behavior.
In the frequency domain, this approximation problem is equivalent to finding a
rational function of degree $k$ that approximates the transfer function of
the original model~\cref{eqn:tf}; see, for example,~\cite{BenMS05, Ant05}.

Explicit full-order models like~\cref{eqn:sys} are not always readily
accessible; for example, in the cases of complex dynamical processes, which
cannot easily be modeled via first-principle theories.
However, in these cases, typically input-output data is available instead.
In the frequency domain, such data corresponds to evaluations of the transfer
function~\cref{eqn:tf} in given frequency points $\{ z_{i}\}_{i = 1}^{M}$ such that the given data
takes the form of tuples $\{ z_{i}, H(z_{i} \}_{i = 1}^{M}$.
In this setting of data-driven reduced-order modeling, the goal is to
approximate the given data in a suitable way by constructing a
degree-$k$ rational function~\cite{BenSGetal21, AntBG20}.
The approaches described in this work can be applied in both scenarios, i.e.,
for model order reduction and data-driven modeling.

%%%%%%%%%%%%%%%%%%%%%%%%%%%%%%%%%%%%%%%%%%%%%%%%%%%%%%%%%%%%%%%%%%%%%%%%%%%%%%%%

\subsection{The interpolatory barycentric form}%
\label{sec:bary}

A rational function $r\colon \C \to \C$ is said to have the degree $k - 1$
if it can be written as the ratio of degree-$(k-1)$ polynomials.
If the rational function $r$ has no poles in the $k$ complex points
$\lambda_{1}, \ldots, \lambda_{k} \in \C$, then it can be expressed via the
interpolatory barycentric form
\begin{equation} \label{eqn:bary}
  r(z) = \frac{n(z)}{d(z)} = \frac{\sum\limits_{j = 1}^{k}
    w_{j} \frac{h_{j}}{z - \lambda_{j}}}%
    {\sum\limits_{j = 1}^{k} w_{j} \frac{1}{z - \lambda_{j}}}.
\end{equation}
In~\cref{eqn:bary}, the points $\lambda_{1}, \ldots, \lambda_{k} \in \C$ are
called the barycentric support points, 
$h_{1}, \ldots, h_{k} \in \C$ are function values, and
$w_{1}, \ldots, w_{k} \in \C$ are called the barycentric weights.
For any nonzero barycentric weight $w_{j}$, the representation~\cref{eqn:bary}
has a removable singularity in the corresponding expansion point
$\lambda_{j}$ in which it can be expanded to take the corresponding function
value $h_{j}$.
Thus, the continuous extension of the rational function $r$ satisfies the
interpolation conditions $r(z_{j}) = h_{j}$, for $j = 1, \ldots, k$;
see~\cite{BerT04, AntA86}.

For fixed support points and functions values, to employ~\cref{eqn:bary} in the
context of approximating functions or data, the barycentric weights can be seen
as free parameters.
These weights can be used to enforce additional interpolation conditions or to
minimize other error measures with respect to the approximated function or the
given data.
We note that the weights $w_{j}$ in~\cref{eqn:bary} are generally non-unique
since scaling each weight by the same constant does not change the rational
function $r$.
This non-uniqueness is typically resolved by either requiring that the vector of
weights has length $1$ in some suitable norm or by setting one the weights
constant, e.g., $w_{1} = 1$.

In the context of model order reduction, the matrix representation of rational
functions in the form~\cref{eqn:rom_tf} is needed for time domain evaluations
of the corresponding system.
To recover the system matrices from the parameters of the barycentric form,
define
\begin{subequations} \label{eqn:bary_to_rom}
\begin{alignat}{2}
    \widehat{\bE} & = \bmat{1 & -1 & & \\
                \vdots & & \ddots & \\
                1 & & & -1\\
                0 & 0 & \ldots & 0}, & \quad
    \widehat{\bA} & =  \bmat{\lambda_{1} & -\lambda_{2} &  & \\
                \vdots & & \ddots & \\
                \lambda_{1} & & & -\lambda_{k} \\
                -h_{1} w_{1} & -h_{2} w_{2} & \ldots & -h_{k} w_{k}}, \\
    \hat{\bb} & = \bmat{0 & \ldots & 0 & 1}^{\trans}, &
    \hat{\bc} & = \bmat{w_{1} & w_{2} & \ldots & b_{k}}^{\trans}. 
\end{alignat}
\end{subequations}
Then the transfer function $r(z)$ in~\cref{eqn:rom_tf} with the matrices
from~\cref{eqn:bary_to_rom} is the same rational function as the barycentric
form~\cref{eqn:bary} with parameters
$\{ \lambda_{j} \}_{j=1}^k$, $\{ h_{j} \}_{j=1}^k$, and
$\{ w_{j} \}_{j=1}^k$; see, for example,~\cite{Ion13}.

Throughout this work, we will denote the vector of
\emph{numerator basis functions} of the barycentric form as
\begin{equation} \label{eqn:nBasis}
  \nBasis(z) =  \bmat{\frac{h_{1}}{z - \lambda_{1}} & \ldots &
    \frac{h_{k}}{z - \lambda_{k}}}^{\trans} \in \C^{k},
\end{equation}
the vector of \emph{denominator basis functions} as
\begin{equation} \label{eqn:dBasis}
  \dBasis(z) =  \bmat{\frac{1}{z - \lambda_{1}} & \ldots &
    \frac{1}{z - \lambda_{k}}}^{\trans} \in \C^{k},
\end{equation}
and the vector of barycentric weights via
\begin{equation}
  \w = \bmat{w_{1} & \ldots & w_{k}}^{\trans} \in \C^{k}.
\end{equation}
Then, the barycentric form~\cref{eqn:bary} can be compactly written as
\begin{equation} \label{eqn:baryProduct}
  r(z; \w) = \frac{n(z; \w)}{d(z; \w)} =
    \frac{\w^{\trans} \nBasis(z)}{\w^{\trans} \dBasis(z)}.
\end{equation}
Note that the dependence of the rational function $r$ as well as the numerator
and denominator terms on the barycentric weights $\w$ serves the simplicity
of presentation since we will further investigate the effects of different
weight choices.

%%%%%%%%%%%%%%%%%%%%%%%%%%%%%%%%%%%%%%%%%%%%%%%%%%%%%%%%%%%%%%%%%%%%%%%%%%%%%%%%

\subsection{AAA and the Levy approximation}%
\label{sec:aaaLevy}

The \AAA{} algorithm has been established as an efficient and effective approach
for the approximation of given data via rational functions~\cite{NakST18}.
The method is based on a greedy selection of barycentric support
points and the Levy approximation~\cite{Lev59} to the rational least-squares
data fitting problem.

In what follows, we assume that for a general function $H: \C \to \C$,
we have given $M \in \N$ samples of the function evaluation of $H$ in the form
\begin{equation} \label{eqn:givenData}
  \dataSet = \big\{ (z_{1}, H(z_{1})), (z_{2}, H(z_{2})),
    \ldots, (z_{M},H(z_{M})) \big\}.
\end{equation}
The objective of \AAA{} is the iterative construction of rational
approximations to the given data~\cref{eqn:givenData} in the barycentric form
\begin{equation}
  r\AAAiter(z; \w\AAAiter) 
    = \frac{n\AAAiter(z; \w\AAAiter)}{d\AAAiter(z; \w\AAAiter)}
    = \frac{\left( \w\AAAiter \right)^{\trans} \nBasis\AAAiter(z)}%
      {\left( \w\AAAiter \right)^{\trans} \dBasis\AAAiter(z)},
\end{equation}
where $k = 0, 1, \ldots, \maxOrder$ denotes the iteration index.
The iteration may be stopped early when the approximation error between the
current rational approximant $r\AAAiter$ and the data $\dataSet$ is below a
user-defined tolerance $\tau$.
When it is clear from the context, the iteration index $k$ is dropped for
simplicity of notation.

The \AAA{} algorithm is initialized with a rational approximant of degree $0$.
The classical choice for the initialization is the average of the given data, 
\begin{equation}
  r\AAAiter[0](z) \equiv \frac{1}{M} \sum\limits_{i = 1}^{M} H(z_{i}).
\end{equation}
For $k \geq 1$, \AAA{} then determines the support points and weights of a
rational function $r$ in the interpolatory barycentric
form~\cref{eqn:bary}.
At the $k$-th iteration, the location of the maximum mismatch between the
current rational approximant and the data is identified to determine
the $k$-th barycentric support point via
\begin{equation}
  \lambda_{k} = \argmax_{(z_{i}, H(z_{i})) \in \dataSet}
    \lvert r\AAAiter[k-1](z_{i}; \w\AAAiter[k-1]) - H(z_{i}) \rvert.
\end{equation}
With the new support point $\lambda_{k}$ and the corresponding function value
$h_{k} = H(\lambda_{k})$, numerator and denominator bases are updated to
\begin{subequations}
\begin{align}
  \nBasis(z) & = \bmat{\frac{h_{1}}{z - \lambda_{1}} & \ldots &
    \frac{h_{k-1}}{z - \lambda_{k-1}} & \frac{h_{k}}{z - \lambda_{k}}}^{\trans} 
    \quad\text{and} \\
  \dBasis(z) & = \bmat{\frac{1}{z - \lambda_{1}} & \ldots &
    \frac{1}{z - \lambda_{k-1}} & \frac{1}{z - \lambda_{k}}}^{\trans}.
\end{align}
\end{subequations}
Afterwards, \AAA{} aims to use the remaining degrees of freedom via the
weights in $\w$ to minimize the least-squares error of the approximation
\begin{equation} \label{eqn:L2Err_AAA}
  E = \sum\limits_{i = 1}^{M}
    \left\lvert r(z_{i}; \w) - H(z_{i}) \right\rvert^{2}.
\end{equation}
Since~\cref{eqn:L2Err_AAA} is nonlinear in the unknown weights $\w$,
instead of solving~\cref{eqn:L2Err_AAA}, \AAA{} minimizes the corresponding
Levy approximation of the error, namely 
\begin{equation} \label{eqn:LevyErrCrit}
  \ELevy = \sum\limits_{i = 1}^{M}
    \left\lvert n(z_{i}; \w) - d(z_{i}; \w) H(z_{i}) \right\rvert^{2},
\end{equation}
where $n$ and $d$ are the numerator and denominator of the barycentric form
of $r$.
Hence, the weights $\w$ are found via
\begin{subequations} \label{eqn:LevyL2Prob_noMatrix}
\begin{align}
  \w & = \argmin\limits_{\bv \in \C^{k}} \sum\limits_{i = 1}^{M}
    \lvert n(z_{i}; \bv) - d(z_{i}; \bv) H(z_{i}) \rvert^{2} \\
  & = \argmin\limits_{\bv \in \C^{k}} \sum_{i = 1}^{M}
    \lvert \bv^{\trans} \nBasis(z_{i}) - \bv^{\trans} \dBasis(z_{i}) H(z_{i})
    \rvert^{2}.
\end{align}
\end{subequations}
The expression in~\cref{eqn:LevyL2Prob_noMatrix} is a homogeneous
linear least-squares problem, and thus has the trivial solution $\w = \zeros$.
This is an artifact of the non-uniqueness of the weights in the barycentric
form.
We follow the original proposition in~\cite{NakST18} and circumvent this
issue by enforcing that $\lVert \w \rVert_{2} = 1$.

To effectively solve~\cref{eqn:LevyL2Prob_noMatrix} with the constraint
$\lVert \w \rVert_{2} = 1$, we first note that as a result of the interpolation
property of the barycentric form, the approximation errors of $r$ in the
support points $\lambda_{1}, \ldots, \lambda_{k}$ is zero.
Therefore, we will exclude the $k$ interpolated data samples from the data
set $\dataSet$ for the solution of the linear least-squares problem.
For simplicity of presentation, we re-index the remaining (non-interpolated)
data samples in $\dataSet$ to be $\{ (z_{i}, H(z_{i})) \}_{i = 1}^{M - k}$.
Now, define the Cauchy matrix $\CauchyMat \in \C^{(M - k) \times k}$ via
\begin{equation} \label{eqn:CauchyMatDef}
  \CauchyMat_{i, j} = \frac{1}{z_{i} - \lambda_{j}},
    \quad\text{for}\quad
    i = 1, \ldots, M - k
    \quad\text{and}\quad
    j = 1, \ldots, k,
\end{equation}
define the diagonal matrix of interpolated function values as
\begin{equation} \label{eqn:interpFunValsMatDef}
  \interpFunVals = \mdiag(h_{1}, h_{2}, \ldots, h_{k} ) \in \C^{k \times k},
\end{equation}
and define the diagonal matrix of non-interpolated data values by
\begin{equation} \label{eqn:FunValsMatDef}
  \FunValsMat = \mdiag(H(z_{1}), H(z_{2}), \ldots, H(z_{M - k})) \in
    \C^{(M - k) \times (M - k)}.
\end{equation}
Then, the solution to~\cref{eqn:LevyL2Prob_noMatrix}, with
$\lVert \w \rVert_{2} = 1$, can equivalently be obtained from
\begin{equation} \label{eqn:LevyL2Prob_Mat}
  \w = \argmin\limits_{\bv \in \C^{k}, \lVert \bv \rVert_{2} = 1}
    \left\lVert \left( \FunValsMat \CauchyMat - \CauchyMat \interpFunVals
    \right) \bv \right\rVert_{2}^{2}.
\end{equation}
Finally, the solution to~\cref{eqn:LevyL2Prob_Mat} is simply given as the
$k$-th right-singular vector of the
matrix~$\FunValsMat \CauchyMat - \CauchyMat \interpFunVals$.
The complete \AAA{} algorithm is summarized in \Cref{alg:AAA}.

\begin{algorithm}[t]
  \SetAlgoHangIndent{1pt}
  \DontPrintSemicolon
  
  \caption{Adaptive Antoulas-Anderson (\AAA) algorithm.}%
  \label{alg:AAA}

  \KwIn{Data set $\dataSet = \{(z_{i}, H(z_{i}))\}_{i = 1}^{M}$,
    error tolerance $\tau$,
    maximum degree of rational approximant $\maxOrder$.}
  \KwOut{Barycentric parameters
    $\{ h_{j} \}_{j = 1}^{k}$,
    $\{ \lambda_{j} \}_{j = 1}^{k}$,
    $\{w_{j}\}_{j = 1}^{k}$.}

  Initialize
    $r\AAAiter[0](z_{i}) \equiv \frac{1}{M} \sum_{i = 1}^{M} H(z_{i})$,
    $\nBasis\AAAiter[0] = [~]$, and
    $\dBasis\AAAiter[0] = [~]$.\;
  
  \For{$k = 1, \ldots, \maxOrder + 1$}{
    Determine the next support point and function value
      \vspace{-.5\baselineskip}
      \begin{equation*}
        (\lambda_{k}, h_{k}) = \argmax\limits_{(z_{i}, H(z_{i})) \in \dataSet}
          \lvert r\AAAiter[k - 1](z_{i}; \w\AAAiter[k - 1]) - H(z_{i}) \lvert.
      \end{equation*}\;
      \vspace{-1.5\baselineskip}

    Update the basis vectors with $(\lambda_{k}, h_{k})$ so that
      \vspace{-.5\baselineskip}
      \begin{align*}
        \nBasis\AAAiter(z) & =
          \bmat{\frac{h_{1}}{z - \lambda_{1}} & \ldots &
          \frac{h_{k-1}}{z - \lambda_{k-1}} &
          \frac{h_{k}}{z - \lambda_{k}}}^{\trans},\\
        \dBasis\AAAiter(z) & =
          \bmat{\frac{1}{z-\lambda_1} & \ldots &
          \frac{1}{z-\lambda_{k-1}} & \frac{1}{z-\lambda_k}}^{\trans}.
      \end{align*}\;
      \vspace{-1.5\baselineskip}

    Update the data set
      $\dataSet \gets \dataSet \setminus \{ (\lambda_{k}, h_{k}) \}$.\;

    Form the matrices $\CauchyMat, \interpFunVals, \FunValsMat$
      via~\cref{eqn:CauchyMatDef,eqn:interpFunValsMatDef,eqn:FunValsMatDef}.\; 

    Solve the constrained linear least-squares problem
      \vspace{-.5\baselineskip}
      \begin{equation*}
        \w\AAAiter = \argmin\limits_{\bv \in \C^{k}, \lVert \bv \rVert_{2} = 1}
          \left\lVert \left( \FunValsMat \CauchyMat -
          \CauchyMat \interpFunVals \right) \bv \right\rVert_{2}^{2}.
      \end{equation*}\;
      \vspace{-1.5\baselineskip}

    \lIf{$\sum_{i = 1}^{M - k}
      \left\lvert r\AAAiter(z_{i}; \w\AAAiter) -
      H(z_{i}) \right\rvert^2 < \tau$}{\textbf{break}}
  }
\end{algorithm}

%%%%%%%%%%%%%%%%%%%%%%%%%%%%%%%%%%%%%%%%%%%%%%%%%%%%%%%%%%%%%%%%%%%%%%%%%%%%%%%%
% REFINEMENTS.                                                                 %
%%%%%%%%%%%%%%%%%%%%%%%%%%%%%%%%%%%%%%%%%%%%%%%%%%%%%%%%%%%%%%%%%%%%%%%%%%%%%%%%

\section{Iterative refinements to the Levy approximation}%
\label{sec:IterativeL2Solve}

The Levy approximation~\cref{eqn:LevyErrCrit} to the rational least-squares
problem~\cref{eqn:L2Err_AAA} enables the rapid construction of rational
approximants in the \AAA{} algorithm.
However, the solution to the linearized least-squares problem may not 
coincide well with the solution to the true rational least-squares problem.
To resolve this, we present in this section two iterative refinement approaches
for Levy's approximation, which improve the approximation accuracy by solving a
sequence of linear least-squares problems.

%%%%%%%%%%%%%%%%%%%%%%%%%%%%%%%%%%%%%%%%%%%%%%%%%%%%%%%%%%%%%%%%%%%%%%%%%%%%%%%%

\subsection{Sanathanan-Koerner iteration}%
\label{sec:SK}

In general, for rational functions of the form~\cref{eqn:baryProduct}, Levy's
approximation to the rational least-squares problem~\cref{eqn:L2Err_AAA} can
be derived using the reformulation of the individual error
terms in~\cref{eqn:L2Err_AAA} by factoring out the denominators such as
\begin{equation}
  \left\lvert \frac{n(z; \w)}{d(z; \w)} - H(z) \right\rvert = 
    \left\lvert \frac{1}{d(z; \w)} \right\rvert
    \left\lvert n(z; \w) - d(z; \w) H(z) \right\rvert.
\end{equation}
Then, Levy's approximation (to the error) is given by
\begin{equation} \label{eqn:LevyScale}
  \left\lvert \frac{1}{d(z; \w)} \right\rvert
    \left\lvert n(z; \w) - d(z; \w) H(z) \right\rvert
    \approx \left\lvert n(z; \w) - d(z; \w) H(z) \right\rvert,
\end{equation}
that is, the nonlinear objective function~\cref{eqn:L2Err_AAA} is scaled by the
reciprocals of the absolute values of the denominator.

In~\cite{SanK63}, Sanathanan and Koerner developed an iteration scheme
(further on referred to as Sanathanan-Koerner iteration or simply
\SK{} iteration) that aims to undo the scaling introduced by the Levy
approximation~\cref{eqn:LevyScale}.
In the $p$-th step of the iteration, for a degree-$(k - 1)$ rational function
in barycentric form, \SK{} finds new weights $\w\skwfIter \in \C^{k}$ as
minimizer of the weighted linear least-squares error
\begin{equation} \label{eqn:SKApx}
  \Esk = \sum\limits_{i = 1}^{M} \frac{1}{\lvert d(z_{i}; \w\skwfIter[p - 1])
    \rvert^{2}} \lvert n(z_{i}; \w\skwfIter) - d(z_{i}; \w\skwfIter)
    H(z_{i}) \rvert^{2},
\end{equation}
where the weighting is given by the $d(z_{i}; \w\skwfIter[p - 1])$ using the
weights from the previous iteration step.
The iteration is initialized with the constant denominator
$d(z; \w\skwfIter[0]) \equiv 1$ such that in the first step of \SK{} the
solution to the Levy approximation~\cref{eqn:LevyErrCrit} is computed.

The original work~\cite{SanK63} describes the numerical procedure with a
generic polynomial basis.
Here, we present the necessary computations in the interpolatory barycentric
form instead.
For fixed numerator basis $\nBasis$ and denominator basis $\dBasis$, at every
iteration step $p \geq 1$, the updated weights $\w\skwfIter$ are given by
\begin{equation} \label{eqn:SKMinProb}
  \w\skwfIter = \argmin\limits_{\bv \in \C^{k}, \lVert \bv \rVert_{2} = 1}
    \sum\limits_{i = 1}^{M} \frac{1}{\lvert \w\skwfIter[p - 1]^{\trans}
    \dBasis(z_{i}) \rvert^{2}}
    \lvert \bv^{\trans} \nBasis(z_{i}) - \bv^{\trans} \dBasis(z_{i}) H(z_{i})
    \rvert^{2},
\end{equation}
where the non-uniqueness of the weights in the barycentric form is resolved via
the added constraint $\lVert \w\skwfIter \rVert_{2} = 1$.

Similar to the Levy approximated least-squares problem, we may
solve~\cref{eqn:SKMinProb} via a reformulation into matrix form.
To this end, we construct the same matrices
$\CauchyMat \in \C^{(M - k) \times k}$,
$\interpFunVals \in \C^{(M - k) \times (M - k)}$, and 
$\FunValsMat \in \C^{k \times k}$ as
in~\cref{eqn:CauchyMatDef,eqn:interpFunValsMatDef,eqn:FunValsMatDef}, and
we define the new diagonal weighting matrix
\begin{equation} \label{eqn:Ddef_SK}
  \SKWFWeightMat = \mdiag\left(
    \frac{1}{\lvert d(z_{1}; \w\skwfIter[p - 1]) \rvert}, \ldots,
    \frac{1}{\lvert d(z_{M - k}; \w\skwfIter[p - 1]) \rvert}
    \right) \in \R^{(M - k)\times (M - k)}.
\end{equation}
Then, the solution to~\cref{eqn:SKMinProb} is equivalent to the solution
to
\begin{equation} \label{eqn:SKMinProb_MatForm}
  \w\skwfIter = \argmin\limits_{\bv \in \C^{k}, \lVert \bv \rVert_{2} = 1}
    \left\lVert \SKWFWeightMat \LowMat \bv \right\rVert_{2}^{2},
\end{equation}
which is given by the $k$-th right-singular vector of the matrix
$\SKWFWeightMat \LowMat$.

The \SK{} iteration is considered to have converged when the change in the
weight vectors of consecutive iteration steps is small enough.
In general, there are no convergence guarantees for the \SK{} iteration, and
the approach has been observed to not necessarily converge for all possible
choices of barycentric support points and function values.
Therefore, in our implementation of the \SK{} iteration, we additionally
evaluate the rational least-squares error~\cref{eqn:L2Err_AAA} at every 
iteration step and return weights $\w\skwfIter$ corresponding to the lowest
rational least-squares error of all computed iterates.
The \SK{} iteration is summarized in \Cref{alg:SK}.

\begin{remark}
  The coefficient matrix in~\cref{eqn:SKMinProb_MatForm} is known to become
  highly ill-conditioned.
  A variety of approaches has been explored to resolve this conditioning
  issue in the standard \SK{} iteration leading, for example, to the vector
  fitting method~\cite{GusS99}, the stabilized \SK{} iteration~\cite{Hok20},
  or the orthogonal rational approximation~\cite{MaE22}.
  However, all of these methods rely on the ability to change the basis
  vectors~\cref{eqn:nBasis,eqn:dBasis} of the rational function at each
  iteration.
  As we require a specific barycentric basis chosen to enforce interpolation
  conditions, these techniques are not applicable to this work.
\end{remark}

\begin{algorithm}[t]
  \SetAlgoHangIndent{1pt}
  \DontPrintSemicolon
  
  \caption{Sanathanan-Koerner (\SK) iteration.}%
  \label{alg:SK}

  \KwIn{Data set $\dataSet = \{(z_{i}, H(z_{i}))\}_{i = 1}^{M - k}$,
    convergence tolerance $\tau_{\SK}$,
    barycentric parameters $\{ \lambda_{j} \}_{j = 1}^{k}$ and
      $\{ h_{j} \}_{j = 1}^{k}$,
    maximum number of iterations $\maxOrderSK$.}
  \KwOut{Barycentric weights $\w$.}

  Initialize $d\skwfIter[0](z) \equiv 1$,
    $\be_{\ltwo} = [~]$, and
    $\w\skwfIter[0] = \zeros$.\;

  Form the matrices $\CauchyMat, \interpFunVals, \FunValsMat$
    via~\cref{eqn:CauchyMatDef,eqn:interpFunValsMatDef,eqn:FunValsMatDef}.\;
  
  \For{$p = 1, \ldots, \maxOrderSK$}{
    Compute the weighting matrix $\SKWFWeightMat$ using~\cref{eqn:Ddef_SK}.\;

    Solve the constrained linear least-squares problem
      \vspace{-.5\baselineskip}
      \begin{equation*}
        \w\skwfIter = \argmin\limits_{\bv \in \C^{k}, \lVert \bv \rVert_{2} = 1}
          \left\lVert \SKWFWeightMat \LowMat \bv \right\rVert_{2}^{2}.
      \end{equation*}\;
      \vspace{-1.5\baselineskip}

    Expand the error vector
      $\be_{\ltwo} \gets \bmat{\be_{\ltwo} &
      \sum_{i = 1}^{M - k}
      \left\lvert r(z_{i}; \w\skwfIter) - H(z_{i}) \right\rvert^{2}}$.\;

    \lIf{$\lVert \w\skwfIter - \w\skwfIter[p - 1] \rVert_{2} < \tau_{\SK}$}%
      {\textbf{break}}
  }

  Find the index $p_{\operatorname{best}}$ of the smallest entry in
    $\be_{\ltwo}$.\;

  Set $\w \gets \w\skwfIter[p_{\operatorname{best}}]$.\;
\end{algorithm}

%%%%%%%%%%%%%%%%%%%%%%%%%%%%%%%%%%%%%%%%%%%%%%%%%%%%%%%%%%%%%%%%%%%%%%%%%%%%%%%%

\subsection{Whitfield's iteration}%
\label{sec:wf}

Another refinement procedure for the solution of the rational least-squares
problem was proposed by Whitfield in~\cite{Whi87}.
Here, we re-derive the approach using the barycentric
representation~\cref{eqn:bary} while the original work used a generic
polynomial basis.
Similar to the \SK{} iteration, Whitfield's (\WF{}) iteration seeks to minimize
the rational least-squares error~\cref{eqn:L2Err_AAA} by solving a sequence of
linear least-squares approximations starting from an initial weight vector
$\w\skwfIter[0]$.
In the following, we assume that
$d(z_{i}; \w) = \w^{\trans} \dBasis(z_{i}) \neq 0$ holds for all $z_{i}$ in
the sampled data set $\dataSet$ from~\cref{eqn:givenData}.

In the $p$-th step of the \WF{} iteration, we form a linear approximation of
the rational function $r$ along the weights centered at the previous weights
$\w\skwfIter[p - 1]$.
This particular linearization of the rational function $r$ is given by
\begin{equation} \label{eqn:LinApxGeneral}
  r(z; \w) \approx r(z; \w\skwfIter[p - 1]) +
    \left(\left. \drdw \right|_{\w = \w\skwfIter[p - 1]} \right)^{\trans}
    (\w - \w\skwfIter[p - 1]).
\end{equation}
Therein, the partial derivative
$r(z; \w) = (\w^{\trans} \nBasis(z))/(\w^{\trans} \dBasis(z))$ with respect
to the weights~$\w$ takes the form
\begin{equation} \label{eqn:totalDerivative}
  \drdw = \frac{1}{d(z; \w)} \big( \nBasis(z) - r(z; \w) \dBasis(z) \big).
\end{equation}
Inserting~\cref{eqn:totalDerivative} into~\cref{eqn:LinApxGeneral} then yields
the linear approximation of the form
\begin{subequations}
\begin{align}
  r(z; \w) & \approx r(z; \w\skwfIter[p - 1]) +
    \left( \frac{1}{d(z; \w\skwfIter[p - 1])}
    \left( \nBasis(z) - r(z; \w\skwfIter[p - 1]) \dBasis(z) \right)
    \right)^{\trans}(\w - \w\skwfIter[p - 1]) \\
  \label{eqn:wf_linearization}
  & = \frac{1}{d(z; \w\skwfIter[p - 1])}
    \Big( n(z; \w) - r(z; \w\skwfIter[p - 1]) d(z; \w) +
    n(z; \w\skwfIter[p - 1]) \Big).
\end{align}
\end{subequations}
Finally, substituting the expression~\cref{eqn:wf_linearization} in the
rational least-squares error~\cref{eqn:L2Err_AAA} gives the new
linear least-squares error
\begin{equation} \label{eqn:WFError}
  \begin{aligned}
    E_{\WF} = \sum\limits_{i = 1}^{M} &
      \frac{1}{\lvert d(z_{i}; \w\skwfIter[p - 1]) \rvert^{2}}
      \Big\lvert n(z_{i}; \w) - r(z_{i}; \w\skwfIter[p - 1]) d(z_{i}; \w) \\
    & {}+{}
      n(z_{i}; \w\skwfIter[p - 1]) - d(z_{i}; \w\skwfIter[p - 1]) H(z_{i})
      \Big\rvert^{2}.
  \end{aligned}
\end{equation}
Then, for every step $p \geq 1$ of the iteration, the new weights
$\w\skwfIter$ are calculated as the solution to
\begin{equation} \label{eqn:WFMinProb}
  \begin{aligned}
    \w\skwfIter = \argmin\limits_{\bv \in \C^{k}}
      \sum\limits_{i = 1}^{M} %&
      \frac{1}{\lvert \w\skwfIter[p - 1]^{\trans} \dBasis(z_{i}) \rvert^{2}} &
      \left\lvert \bv^{\trans} \nBasis(z_{i}) -
      \frac{\w\skwfIter[p - 1]^{\trans} \nBasis(z_{i})}%
        {\w\skwfIter[p - 1]^{\trans} \dBasis(z_{i})}
      \bv^{\trans} \dBasis(z_{i}) \right.\\
  & ~{}+{}
    \left.\vphantom{\frac{\w\skwfIter[p - 1]^{\trans}}%
      {\w\skwfIter[p - 1]^{\trans}}}
    \w^{\trans}\skwfIter[p - 1] \nBasis(z_{i}) -
    \w\skwfIter[p - 1]^{\trans} \dBasis(z_{i}) H(z_{i}) \right\rvert^{2}.
  \end{aligned}
\end{equation}

We note that the least-squares error in~\cref{eqn:WFError} depends on both the
previous numerator $n(z; \w\skwfIter[p - 1])$ and the previous denominator
$d(z; \w\skwfIter[p - 1])$.
Initializing both of these quantities for the iteration is a non-trivial task.
In particular, initializing both numerator and denominator identical to $1$
does not result in the Levy approximation~\cref{eqn:LevyErrCrit}.
We explore initialization strategies for the \WF{} iteration later in
\Cref{sec:WFInits}.

While both the Levy approximation and \SK{} iteration require the solution of
constrained homogeneous linear least-squares
problems~\cref{eqn:LevyL2Prob_Mat,eqn:SKMinProb_MatForm}, the \WF{} iteration
calls for the solution of non-homogeneous linear least-squares problems of the
form~\cref{eqn:WFMinProb} and hence does not yield a trivial solution.
However, \cref{eqn:WFMinProb}  does not have a unique solution either due to the
non-uniqueness of the weights in the barycentric form~\cref{eqn:bary}.
To resolve the non-uniqueness of the least-squares solution in the \WF{}
iteration, we enforce the first weight vector entry to be $1$, i.e., we add
the constraint
\begin{equation} \label{eqn:WF_wConstraint}
  \left[ \w\skwfIter \right]_{1} = 1.
\end{equation}
For the solution of~\cref{eqn:WFMinProb} with the
constraint~\cref{eqn:WF_wConstraint}, we assemble the weight matrix
$\SKWFWeightMat$ as in~\cref{eqn:Ddef_SK}, and define the new matrix
$\WFCoefMat \in \C^{(M - k)\times k}$ entrywise via
\begin{equation} \label{eqn:WFCoefMatDef}
  \left[ \WFCoefMat \right]_{i, j} = \frac{h_{j}}{z_{i} - \lambda_{j}} -
    r(z_{i}; \w\skwfIter[p - 1]) \frac{1}{z_{i} - \lambda_{j}},
\end{equation}
for $i = 1, \ldots M - k$ and $j = 1, \ldots, k$,
and we define the vector $\WFRHS \in \C^{(M - k)}$ as
\begin{equation} \label{eqn:WFRHSDef}
  \left[ \WFRHS \right]_{i} = -n(z_{i}; \w\skwfIter[p - 1]) +
    d(z_{i}; \w\skwfIter[p - 1]) H(z_{i}).
\end{equation}
for $i = 1, \ldots M - k$.
Similar to the construction of $\SKWFWeightMat$ in~\cref{eqn:Ddef_SK}, the data
samples used in~\cref{eqn:WFCoefMatDef,eqn:WFRHSDef} do not include the
interpolated data samples that are used as parameters in the barycentric form.
Using these matrices, the solution to~\cref{eqn:WFMinProb} with the
constraint~\cref{eqn:WF_wConstraint} is given by
\begin{equation} \label{eqn:WFminProb_matrix}
  \w\skwfIter = \argmin\limits_{\bv \in \C^{k}, \bv_{1} = 1}
    \left\lVert \SKWFWeightMat \left( \left[ \WFCoefMat \right]_{:, 2:k}
    \left[ \bv \right]_{2:k} -
    \big( \WFRHS - \left[ \WFCoefMat \right]_{:, 1} \big) \right)
    \right\rVert_{2}^{2},
\end{equation}
where $\left[ \WFCoefMat \right]_{:, 2:k}$ denotes the last $k - 1$ columns
of the matrix $\WFCoefMat$, while $\left[ \WFCoefMat \right]_{:, 1}$ denotes
the first column of $\WFCoefMat$, and $\left[ \bv \right]_{2:k}$ marks the
last $k - 1$ rows of the vector $\bv$.
\Cref{eqn:WFminProb_matrix} can be seen as a classical unconstrained
non-homogeneous weighted linear least-squares problem in $k - 1$ variables,
and it can be solved by any off-the-shelf solver.

Similar to the case of the \SK{} iteration, there are no convergence guarantees
for the \WF{} iteration.
As such, we employ the same strategy in the implementation of the \WF{}
iteration to save the rational least-squares errors~\cref{eqn:L2Err_AAA}
at every iteration step and then return the weights corresponding to the
smallest error.
The \WF{} iteration is outlined in \Cref{alg:WF}.

\begin{algorithm}[t]
  \SetAlgoHangIndent{1pt}
  \DontPrintSemicolon
  
  \caption{Whitfield's (\WF) iteration.}%
  \label{alg:WF}

  \KwIn{Data set $\dataSet = \{(z_{i}, H(z_{i}))\}_{i = 1}^{M - k}$,
    convergence tolerance $\tau_{\WF}$,
    barycentric parameters $\{ \lambda_{j} \}_{j = 1}^{k}$ and
      $\{ h_{j} \}_{j = 1}^{k}$,
    initial weights $\w\skwfIter[0]$,
    maximum number of iterations $\maxOrderSK$.}
  \KwOut{Barycentric weights $\w$.}

  Initialize $\be_{\ltwo} = [~]$.\;
  
  \For{$p = 1, \ldots, \maxOrderSK$}{
    Construct $\SKWFWeightMat, \WFCoefMat$ and $\WFRHS$
      using~\cref{eqn:Ddef_SK,eqn:WFCoefMatDef,eqn:WFRHSDef}.\;
  
    Solve the linear least-squares problem
    \vspace{-.5\baselineskip}
    \begin{equation*}
      \w\skwfIter = \argmin\limits_{\bv \in \C^{k}, \bv_{1} = 1}
        \left\lVert \SKWFWeightMat \left( \left[ \WFCoefMat \right]_{:, 2:k}
        \left[ \bv \right]_{2:k} -
        \big( \WFRHS - \left[ \WFCoefMat \right]_{:, 1} \big) \right)
        \right\rVert_{2}^{2}.
    \end{equation*}\;
    \vspace{-1.5\baselineskip}

    Expand the error vector
      $\be_{\ltwo} \gets \bmat{\be_{\ltwo} &
      \sum_{i = 1}^{M - k}
      \left\lvert r(z_{i}; \w\skwfIter) - H(z_{i}) \right\rvert^{2}}$.\;

    \lIf{$\lVert \w\skwfIter - \w\skwfIter[p - 1] \rVert_{2} < \tau_{\WF}$}%
      {\textbf{break}}
  }
  
  Find the index $p_{\operatorname{best}}$ of the smallest entry in
    $\be_{\ltwo}$.\;

  Set $\w \gets \w\skwfIter[p_{\operatorname{best}}]$.\;
\end{algorithm}

%%%%%%%%%%%%%%%%%%%%%%%%%%%%%%%%%%%%%%%%%%%%%%%%%%%%%%%%%%%%%%%%%%%%%%%%%%%%%%%%
% MAIN METHOD.                                                                 %
%%%%%%%%%%%%%%%%%%%%%%%%%%%%%%%%%%%%%%%%%%%%%%%%%%%%%%%%%%%%%%%%%%%%%%%%%%%%%%%%

\section{Nonlinear least-squares refinement for AAA}%
\label{sec:AAAA}

In principle, the two steps performed in the classical \AAA{} are the greedy
selection of the next interpolation point from the data and the subsequent
fit of the barycentric weights using the Levy approximation of the rational
least-squares problem.
We propose the replacement of the Levy approximation in \AAA{} by the iterative
refinement methods discussed earlier.
In this section, we first describe our proposed new \AAA{}-type algorithm
before we analyze the potential effectiveness of the different refinement
methods integrated into our proposed algorithm for the solution of the true
nonlinear least-squares problem.

%%%%%%%%%%%%%%%%%%%%%%%%%%%%%%%%%%%%%%%%%%%%%%%%%%%%%%%%%%%%%%%%%%%%%%%%%%%%%%%%

\subsection{Algorithm description}%
\label{sec:method}

Using the machinery outlined in \Cref{sec:IterativeL2Solve}, we present our
Nonlinear Least-squares \AAA{} algorithm (\AAAA{}) in \Cref{alg:AAAA}.
While the first steps of \Cref{alg:AAA} (\AAA{}) and \Cref{alg:AAAA} are identical,
the solution to the constrained linear least-squares problem in \Cref{alg:AAA}
is replaced by
\hyperref[alg:AAAA_SK]{Lines~\ref{alg:AAAA_SK}}--\ref{alg:AAAA_endif}
in \Cref{alg:AAAA}.
First, a refined set of weights $\w_{\SK}$ is computed using the
\SK{} iteration from \Cref{alg:SK} in \Cref{alg:AAAA_SK}.
Then, a single iteration step of the \WF{} iteration from \Cref{alg:WF} is
performed in \Cref{alg:AAAA_WF1} using the previous set of weights extended by
zero, $\sbmat{\left(\w\AAAiter[k-1]\right)^{\trans} & 0}^{\trans}$, as initialization to
compute an alternative set of weights~$\w\skwfIter[1]$.
Afterwards, the two weight vectors are compared in terms of the corresponding
approximation errors to determine how to initialize a complete run of the
\WF{} iteration.
If $\w_{\SK}$ gives the smaller error, then \Cref{alg:WF} is run with the
initialization $\w\skwfIter[0] = \w_{\SK}$ to compute $\w\AAAiter$.
Otherwise, \Cref{alg:WF} is run with the initialization
$\sbmat{\left(\w\AAAiter[k-1]\right)^{\trans} & 0}^{\trans}$.
The motivation for this specific initialization strategy and alternative
strategies in the case that the iterative refinement strategies fail are
explained in detail in \Cref{sec:WFInits}.

\begin{algorithm}[t]
  \SetAlgoHangIndent{1pt}
  \DontPrintSemicolon
  
  \caption{Nonlinear Least-squares \AAA{} (\AAAA) algorithm.}%
  \label{alg:AAAA}

  \KwIn{Data set $\dataSet = \{(z_{i}, H(z_{i}))\}_{i = 1}^{M}$,
    error tolerance $\tau$,
    maximum number of refinement iterations $\maxOrderSK$,
    refinement convergence tolerances $\tau_{\SK}$ and $\tau_{\WF}$,
    maximum degree of rational approximant~$\maxOrder$.}
  \KwOut{Barycentric parameters
    $\{ h_{j} \}_{j = 1}^{k}$,
    $\{ \lambda_{j} \}_{j = 1}^{k}$,
    $\{w_{j}\}_{j = 1}^{k}$.}

  Initialize
    $r\AAAiter[0](z_{i}) \equiv \frac{1}{M} \sum_{i = 1}^{M} H(z_{i})$,
    $\nBasis\AAAiter[0] = [~]$, and
    $\dBasis\AAAiter[0] = [~]$.\;
  
  \For{$k = 1, \ldots, \maxOrder + 1$}{
    Determine the next support point and function value
      \vspace{-.5\baselineskip}
      \begin{equation*}
        (\lambda_{k}, h_{k}) = \argmax\limits_{(z_{i}, H(z_{i})) \in \dataSet}
          \lvert r\AAAiter[k - 1](z_{i}; \w\AAAiter[k - 1]) - H(z_{i}) \lvert.
      \end{equation*}\;
      \vspace{-1.5\baselineskip}

    Update the basis vectors with $(\lambda_{k}, h_{k})$ so that
      \vspace{-.5\baselineskip}
      \begin{align*}
        \nBasis\AAAiter(z) & =
          \bmat{\frac{h_{1}}{z - \lambda_{1}} & \ldots &
          \frac{h_{k-1}}{z - \lambda_{k-1}} &
          \frac{h_{k}}{z - \lambda_{k}}}^{\trans},\\
        \dBasis\AAAiter(z) & =
          \bmat{\frac{1}{z-\lambda_1} & \ldots &
          \frac{1}{z-\lambda_{k-1}} & \frac{1}{z-\lambda_k}}^{\trans}.
      \end{align*}\;
      \vspace{-1.5\baselineskip}

    Update the data set
      $\dataSet \gets \dataSet \setminus \{ (\lambda_{k}, h_{k}) \}$.\;
      \label{alg:AAAA_max}

    Compute $\w_{\SK}$ via $\maxOrderSK$ iterations of
      \Cref{alg:SK} and tolerance $\tau_{\SK}$.\;
      \label{alg:AAAA_SK}

    Compute $\w\skwfIter[1]$ via $1$ iteration of \Cref{alg:WF}
      using $\bmat{\left(\w\AAAiter[k-1]\right)^{\trans} & 0}^{\trans}$.\;
      \label{alg:AAAA_WF1}

    \eIf{$\sum_{i = 1}^{M - k}
      \left\lvert r\AAAiter(z_{i}; \w_{\SK}) - H(z_{i}) \right\rvert^2
      <
      \sum_{i = 1}^{M - k}
      \left\lvert r\AAAiter(z_{i}; \w\skwfIter[1]) - H(z_{i}) \right\rvert^2$}{
        Compute the weights $\w\AAAiter$ via $\maxOrderSK$ iterations
          of \Cref{alg:WF} using $\w\skwfIter[0] = \w_{\SK}$ and
          tolerance $\tau_{\WF}$.\;
        \label{alg:AAAA_init1}
    }{
        Compute the weights $\w\AAAiter$ via $\maxOrderSK$ iterations of
          \Cref{alg:WF} using $\w\skwfIter[0] =
          \bmat{\left(\w\AAAiter[k - 1]\right)^{\trans} & 0}^{\trans}$
          and tolerance $\tau_{\WF}$.\;
        \label{alg:AAAA_init2}
    }
    \label{alg:AAAA_endif}

    \lIf{$\sum_{i = 1}^{M - k}
      \left\lvert r\AAAiter(z_{i}; \w\AAAiter) -
      H(z_{i}) \right\rvert^{2} < \tau$}{\textbf{break}}
  }
\end{algorithm}

In terms of additional computational costs for \Cref{alg:AAAA} compared
to \Cref{alg:AAA}, we note that the main costs in both algorithms stem from the
solution of linear least-squares problems.
Thereby, in the case that neither refinement procedure converges early,
\Cref{alg:AAAA} is solving two sequences of linear least-squares
problems of the same dimensions as the single one in \Cref{alg:AAA} making
\Cref{alg:AAAA} computationally more expensive than the classical method in
terms of computation time.
However, the amount of additional computation time is bounded by the
maximum number of iterations in the refinement procedures so that each step of
\Cref{alg:AAAA} is at most $2\maxOrderSK$ times as expensive as the
corresponding step in \Cref{alg:AAA}.
Therefore, our proposed method lies in the same order of computational
complexity as the original method.
As we will show in various numerical examples, \AAAA{} typically achieves a
given tolerance at a lower degree than AAA; thus even though the individual
iteration step is computationally more expensive, in many cases \AAAA{}
achieves the given target tolerance with a smaller number of iteration steps.

%%%%%%%%%%%%%%%%%%%%%%%%%%%%%%%%%%%%%%%%%%%%%%%%%%%%%%%%%%%%%%%%%%%%%%%%%%%%%%%%

\subsection{Analysis of the approximation and refinement methods}%
\label{sec:optimal}

In this section, we provide the theoretical justification for the design of our
proposed \AAAA{} algorithm (\Cref{alg:AAAA}).
After a quick introduction of the Wirtinger calculus, we derive and analyze
the gradients of the approximations done by Levy, \SK, and \WF.
While all these approaches strive to reduce the rational least-squares
error~\cref{eqn:L2Err_AAA}, we show that only \WF{} minimizes this particular
error.
In contrast to the work in~\cite{Whi87}, we rigorously treat the nonanalytic
error criteria using the barycentric representation~\cref{eqn:bary} and also
provide additional analysis.

%%%%%%%%%%%%%%%%%%%%%%%%%%%%%%%%%%%%%%%%

\subsubsection{Wirtinger calculus}%
\label{sec:wirtinger}

Our main tool for the derivation of the gradients of the error
function~\cref{eqn:L2Err_AAA} is the Wirtinger calculus~\cite{Wir27}.
As the objective function and approximations to it are in general mappings
from $\C^{M}$ to $\R$, they are not complex analytic and hence cannot be
differentiated in the notion of classical complex derivatives.
However, these functions are differentiable using the Wirtinger calculus, which
was specifically designed as a calculus for functions which are not
complex-differentiable when regarded as functions from $\C^{M}$ to $\C$
(resp. $\R$), but are real-differentiable when regarded as functions from
$\R^{2M}$ to $\R^{2}$ (resp. $\R$).
To this end, the Wirtinger calculus re-interprets a function
$f:\C^{M} \to \R$ that depends on the complex variable $\bz \in \C^{M}$ as
a function of $\bz$ and $\overline{\bz}$, where $\bz$ and $\overline{\bz}$
are treated as independent variables.
Equivalently, the function $f$ can be considered as a function of the real
and imaginary parts of the complex variable $\bz$; see~\cite{Kre09}.
Then, two gradients are defined based on the partial derivatives of the
function $f$ with respect to the real and imaginary parts of the variable $\bz$:
the Wirtinger derivative denoted by
$\frac{\diff}{\diff \bz} f(\bz, \overline{\bz})$ and the
conjugate Wirtinger derivative denoted as
$\frac{\diff}{\diff \overline{\bz}}f(\bz,\overline{\bz})$;
see~\cite{Wir27}.
In the case that the Wirtinger derivative or conjugate Wirtinger derivative is
$\zeros$, the function $f$ is at a stationary point.
Furthermore, since in this work we differentiate only real-valued error
functions, the following identity holds
\begin{equation}
  \frac{\diff f(\bz,\overline{\bz})}{\diff \overline{\bz}} =
    \overline{\frac{\diff f(\bz,\overline{\bz})}{\diff \bz}}.
\end{equation}
Consequently, it is sufficient for us to compute only the Wirtinger derivatives
of our error functions in this work.
For further information on the Wirtinger calculus, we refer the reader to the
collection of lecture notes~\cite{Kre09}.

%%%%%%%%%%%%%%%%%%%%%%%%%%%%%%%%%%%%%%%%

\subsubsection{Wirtinger derivatives of error functions}%
\label{sec:graderr}

With the Wirtinger calculus at hand, we provide in this section the gradients
of the nonlinear rational least-squares error criterion~\cref{eqn:L2Err_AAA}
and the approximations to that least-squares error done
in~\cref{eqn:LevyErrCrit} (Levy),~\cref{eqn:SKApx} (\SK),
and~\cref{eqn:WFError} (\WF).
By comparison of the different gradients, we will see that only \WF{}
upon convergence truly minimizes the objective of
interest~\cref{eqn:L2Err_AAA}.
However, the gradients of the Levy approximation and \SK{} will reveal in which
cases the results of these two approximations are \emph{nearly} optimal.

We begin with the Wirtinger derivative of the nonlinear least-squares error
in the following lemma.

\begin{lemma}
  The Wirtinger derivative of the nonlinear rational least-squares
  error~\cref{eqn:L2Err_AAA} with respect to the barycentric weights
  $\w$ is given by
  \begin{equation} \label{eqn:grad_ltwo}
    \dEdw = \sum_{i = 1}^{M} \frac{1}{d(z_{i}; \w)}
      \big( \nBasis(z_{i}) - r(z_{i}; \w) \dBasis(z_{i}) \big)
      \overline{\big( r(z_{i}; \w) - H(z_{i}) \big)}.
  \end{equation}
\end{lemma}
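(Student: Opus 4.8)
The plan is to make the dependence of the error on the weights $\w$ and their conjugate $\overline{\w}$ explicit and then to differentiate term by term with the product rule of the Wirtinger calculus. First I would write each summand of the error \cref{eqn:L2Err_AAA} as
\[
  \lvert r(z_{i}; \w) - H(z_{i}) \rvert^{2}
    = e_{i}\, \overline{e_{i}},
    \qquad e_{i} := r(z_{i}; \w) - H(z_{i}),
\]
so that the first factor $e_{i}$ depends on $\w$ only, while the second factor $\overline{e_{i}}$ depends on $\overline{\w}$ only. This is the step that separates the analytic and anti-analytic parts of the otherwise nonanalytic objective and sets up the Wirtinger machinery introduced in \cref{sec:wirtinger}.

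Next, treating $\w$ and $\overline{\w}$ as independent variables, I would apply the Wirtinger product rule to each summand,
\[
  \frac{\diff}{\diff \w} \big( e_{i}\, \overline{e_{i}} \big)
    = \overline{e_{i}}\, \frac{\diff e_{i}}{\diff \w}
      + e_{i}\, \frac{\diff \overline{e_{i}}}{\diff \w}.
\]
The key observation is that $r(z; \w) = \w^{\trans} \nBasis(z) / \w^{\trans} \dBasis(z)$ from \cref{eqn:baryProduct} is a rational, and hence complex-analytic, function of $\w$ at every data point where $d(z_{i}; \w) \neq 0$. Consequently $\overline{e_{i}}$ is a function of $\overline{\w}$ alone, its Wirtinger derivative with respect to $\w$ vanishes, $\frac{\diff \overline{e_{i}}}{\diff \w} = \zeros$, and only the first term survives.

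The final step substitutes the partial derivative of $r$ recorded in \cref{eqn:totalDerivative}, namely $\frac{\diff e_{i}}{\diff \w} = \drdw = \frac{1}{d(z_{i}; \w)} \big( \nBasis(z_{i}) - r(z_{i}; \w) \dBasis(z_{i}) \big)$, and sums over $i = 1, \ldots, M$ to recover exactly \cref{eqn:grad_ltwo}. The one point that needs genuine care, rather than routine calculation, is the justification that the conjugate factor contributes nothing to the derivative with respect to $\w$: this rests on the analyticity of $r$ in $\w$ (equivalently $\frac{\diff r}{\diff \overline{\w}} = \zeros$) at the sampled points, which in turn uses the standing assumption that the denominator does not vanish on the data. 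Everything else is the bookkeeping of the product rule applied termwise and the insertion of the already-derived expression for $\drdw$.
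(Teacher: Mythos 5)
Your proof is correct and follows essentially the same route as the paper: the paper's one-line proof simply invokes the derivative formula~\cref{eqn:totalDerivative}, and your write-up supplies the details it leaves implicit, namely the splitting $\lvert e_i\rvert^2 = e_i\,\overline{e_i}$ and the vanishing of the conjugate factor's Wirtinger derivative, which is exactly the argument the paper spells out for the analogous Levy, \SK{}, and \WF{} derivatives in the subsequent theorem. No gaps.
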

\begin{proof}
  The result directly follows from applying the partial derivative
  formula~\cref{eqn:totalDerivative} to the nonlinear rational least-squares
  error~\cref{eqn:L2Err_AAA}.
\end{proof}

The derivative given in~\cref{eqn:grad_ltwo} needs to vanish at the stationary
points of the nonlinear rational least-squares error.
In contrast to that, the following theorem provides the Wirtinger derivatives
for the approximations used in the different solution procedures from above,
namely Levy, \SK{}, and \WF{}.

\begin{theorem}
  The Wirtinger derivative of the Levy approximation error
  criterion~\cref{eqn:LevyErrCrit} with respect to the barycentric weights
  $\w$ is given by
  \begin{equation} \label{eqn:grad_levy}
    \dLevydw = \sum_{i = 1}^{M}
      \big( \nBasis(z_{i}) - H(z_{i}) \dBasis(z_i) \big)
    \overline{\big(n(z_{i}; \w) - d(z_{i}; \w) H(z_{i}) \big)}.
  \end{equation}
  In the case that the \SK{} iteration converges, the Wirtinger derivative of
  the \SK{} error criterion~\cref{eqn:SKApx} in the corresponding fixed point
  with respect to the barycentric weights $\w$ is given by
  \begin{equation} \label{eqn:grad_SK}
    \dSKdw = \sum_{i = 1}^{M} \frac{1}{d(z_{i}; \w)}
      \big( \nBasis(z_{i}) - H(z_{i}) \dBasis(z_{i}) \big)
      \overline{\big( r(z_{i}; \w) - H(z_{i}) \big)}.
  \end{equation}
  Finally, if the \WF{} iteration converges, then the Wirtinger derivative of
  the \WF{} error criterion~\cref{eqn:WFError} in the corresponding fixed point
  with respect to the barycentric weights $\w$ is given by
  \begin{equation} \label{eqn:grad_WF}
    \dWFdw = \sum_{i = 1}^{M} \frac{1}{d(z_{i}; \w)}
      \big( \nBasis(z_{i}) - r(z_{i}; \w) \dBasis(z_{i}) \big)
      \overline{\big( r(z_{i}; \w) - H(z_{i}) \big)}.
  \end{equation}
\end{theorem}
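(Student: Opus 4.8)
The plan is to compute each of the three Wirtinger derivatives directly by differentiating the respective error criterion, treating $\w$ and $\overline{\w}$ as independent variables and exploiting the structure already established in~\cref{eqn:totalDerivative}. For the Levy derivative~\cref{eqn:grad_levy}, the situation is the easiest because the error criterion~\cref{eqn:LevyErrCrit} is \emph{polynomial} (bilinear) in $\w$ and $\overline{\w}$: writing each summand as $\lvert n(z_i;\w) - d(z_i;\w) H(z_i) \rvert^2 = \big(n(z_i;\w) - d(z_i;\w) H(z_i)\big)\overline{\big(n(z_i;\w) - d(z_i;\w) H(z_i)\big)}$ and recalling that $n(z;\w) = \w^\trans \nBasis(z)$ and $d(z;\w) = \w^\trans \dBasis(z)$ are \emph{linear} in $\w$, I differentiate only the holomorphic factor with respect to $\w$ (the conjugate factor is held fixed under the Wirtinger rule). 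The derivative of $n(z_i;\w)-d(z_i;\w)H(z_i)$ with respect to $\w$ is $\nBasis(z_i)-H(z_i)\dBasis(z_i)$, which immediately yields~\cref{eqn:grad_levy}.

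For the \SK{} derivative~\cref{eqn:grad_SK}, the key subtlety is that the weighting factor $1/\lvert d(z_i;\w\skwfIter[p-1])\rvert^2$ in~\cref{eqn:SKApx} depends on the \emph{previous} iterate $\w\skwfIter[p-1]$, not on the current optimization variable. I would therefore treat this factor as a frozen constant while differentiating the bilinear term $\lvert n(z_i;\w)-d(z_i;\w)H(z_i)\rvert^2$ exactly as in the Levy case, producing a gradient of the form $\sum_i \frac{1}{\lvert d(z_i;\w\skwfIter[p-1])\rvert^2}\big(\nBasis(z_i)-H(z_i)\dBasis(z_i)\big)\overline{\big(n(z_i;\w)-d(z_i;\w)H(z_i)\big)}$. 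The crucial step is then to impose the fixed-point condition: at convergence $\w\skwfIter = \w\skwfIter[p-1] = \w$, so $d(z_i;\w\skwfIter[p-1]) = d(z_i;\w)$. Using $\frac{1}{\lvert d\rvert^2} = \frac{1}{d\,\overline d}$, I pull $1/d(z_i;\w)$ out in front and combine the remaining $1/\overline{d(z_i;\w)}$ with the conjugated factor: $\frac{1}{\overline{d(z_i;\w)}}\overline{\big(n(z_i;\w)-d(z_i;\w)H(z_i)\big)} = \overline{\big(n(z_i;\w)/d(z_i;\w) - H(z_i)\big)} = \overline{\big(r(z_i;\w)-H(z_i)\big)}$, which collapses the expression into~\cref{eqn:grad_SK}.

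For the \WF{} derivative~\cref{eqn:grad_WF}, I again freeze the previous-iterate quantities $d(z_i;\w\skwfIter[p-1])$, $r(z_i;\w\skwfIter[p-1])$, and $n(z_i;\w\skwfIter[p-1])$ appearing in~\cref{eqn:WFError}, so that the bracketed argument is linear in the current $\w$ through $n(z_i;\w)-r(z_i;\w\skwfIter[p-1])d(z_i;\w)$. Differentiating the squared modulus with respect to $\w$ gives the holomorphic derivative $\nBasis(z_i)-r(z_i;\w\skwfIter[p-1])\dBasis(z_i)$ times the conjugate of the full bracket, all scaled by $1/\lvert d(z_i;\w\skwfIter[p-1])\rvert^2$. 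Imposing the fixed point $\w\skwfIter=\w\skwfIter[p-1]=\w$ then does the work: the derivative factor becomes $\nBasis(z_i)-r(z_i;\w)\dBasis(z_i)$, and the conjugated bracket simplifies because the telescoping terms $n(z_i;\w) - r(z_i;\w)d(z_i;\w) + n(z_i;\w) - d(z_i;\w)H(z_i)$ reduce — using $n(z_i;\w)=r(z_i;\w)d(z_i;\w)$ — to $d(z_i;\w)\big(r(z_i;\w)-H(z_i)\big)$, whose product with the frozen weight $1/\lvert d(z_i;\w)\rvert^2$ again yields the clean form $\frac{1}{d(z_i;\w)}\overline{\big(r(z_i;\w)-H(z_i)\big)}$ after the same $1/\overline d$ cancellation.

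The main obstacle is bookkeeping rather than conceptual: I must be scrupulous about which quantities are held fixed during differentiation (the previous iterate and, under the Wirtinger rule, the conjugate factor) versus which carry the derivative, and then about the order in which I apply the fixed-point substitution and the algebraic cancellation of the $1/\overline{d}$ factor. The payoff, worth stating explicitly, is that after these simplifications the \WF{} gradient~\cref{eqn:grad_WF} coincides exactly with the true nonlinear least-squares gradient~\cref{eqn:grad_ltwo}, whereas the \SK{} gradient~\cref{eqn:grad_SK} differs only in that the factor $\nBasis(z_i)-r(z_i;\w)\dBasis(z_i)$ is replaced by $\nBasis(z_i)-H(z_i)\dBasis(z_i)$ — so that the two agree precisely when $r(z_i;\w)\approx H(z_i)$, i.e.\ when the fit is already accurate.
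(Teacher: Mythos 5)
Your proposal is correct and follows essentially the same route as the paper: rewrite each squared modulus as the product with its conjugate, differentiate only the holomorphic factor while freezing the previous-iterate quantities, and then impose the fixed-point condition $\w\skwfIter = \w\skwfIter[p-1]$ to collapse the \SK{} and \WF{} gradients into the stated forms. Your explicit tracking of the $1/\overline{d}$ cancellation and the vanishing of $n - r\,d$ at the fixed point only spells out simplifications the paper leaves implicit.
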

\begin{proof}
  Starting with the Levy approximation in~\cref{eqn:LevyErrCrit}, we can rewrite
  this expression as
  \begin{equation} \label{eqn:sep_conj}
    \ELevy = \sum_{i = 1}^{M} \big( n(z_{i}; \w) - d(z_{i}; \w) H(z_{i}) \big)
      \overline{\big( n(z_{i}; \w) - d(z_{i}; \w) H(z_{i}) \big)}.
  \end{equation}
  Computing the Wirtinger derivative of this with respect to $\w$ only affects
  the first term in the product, which results in~\cref{eqn:grad_levy}.
  For the \SK{} iteration, we have in the $p$-th step the error expression
  \begin{equation} \label{eqn:sk_perr}
    \Esk = \sum_{i = 1}^{M}
      \frac{1}{\lvert d(z_{i}; \w\skwfIter[p - 1]) \rvert^{2}}
      \lvert n(z_{i}; \w) - d(z_{i}; \w) H(z_{i}) \rvert^{2}.
  \end{equation}
  We note that the variable weights only appear in the absolute value of the
  error but not in the pre-multiplied fraction.
  As in~\cref{eqn:sep_conj}, we can split the absolute value into the
  multiplication with the conjugate expression and, as a result, the Wirtinger
  derivative of~\cref{eqn:sk_perr} is given by
  \begin{equation} \label{eqn:grad_SKnotConverged}
    \dSKdw = \sum_{i = 1}^{M}
      \frac{1}{\lvert d(z_{i}; \w\skwfIter[p - 1]) \rvert^{2}}
      \big( \nBasis(z_{i}) - H(z_{i}) \dBasis(z_{i}) \big)
      \overline{\big( n(z_{i}; \w) - H(z_{i}) d(z_{i}; \w) \big)}.
  \end{equation}
  In the case that the \SK{} iteration converges, we have that
  $\w\skwfIter[p-1] = \w\skwfIter$.
  Then, the expression in~\cref{eqn:grad_SKnotConverged} can be simplified
  into~\cref{eqn:grad_SK}.
  Finally, for the \WF{} iteration in the $p$-th step, we have the error
  expression
  \begin{equation} \label{eqn:wf_perr}
    \begin{aligned}
      \Ewf = \sum_{i = 1}^{M} &
        \frac{1}{\lvert d(z_{i}; \w\skwfIter[p - 1]) \rvert^{2}}
        \lvert n(z_{i}; \w) - r(z_{i}; \w\skwfIter[p - 1]) d(z_{i}; \w) \\
      & {}+{}
        n(z_{i}; \w\skwfIter[p - 1]) - d(z_{i}; \w\skwfIter[p - 1])
        H(z_{i}) \rvert^{2},
    \end{aligned}
  \end{equation}
  where $\w\skwfIter[p - 1]$ are as in \SK{} the weights obtained in the
  previous step and independent of the variable $\w$.
  Using the same rewriting of the absolute value as in~\cref{eqn:sep_conj},
  the Wirtinger derivative of~\cref{eqn:wf_perr} is given by
  \begin{equation} \label{eqn:grad_WFnotConverged}
    \begin{aligned}
      \dWFdw = \sum_{i = 1}^{M} &
        \frac{1}{\lvert d(z_{i}; \w\skwfIter[p - 1]) \rvert^{2}}
        \big( \nBasis(z_{i}) - r(z_{i}; \w\skwfIter[p - 1]) \dBasis(z_{i}) \big)
        \overline{\big( n(z_{i}; \w)} \\
        &\overline{{}-{}
        r(z_{i}; \w\skwfIter[p - 1]) d(z_{i}; \w) +
        n(z_{i}; \w\skwfIter[p - 1]) - d(z_{i}; \w\skwfIter[p - 1])
        H(z_{i}) \big)}.
    \end{aligned}
  \end{equation}
  In the case that the \WF{} iteration converges so that
  $\w\skwfIter[p-1] = \w\skwfIter$, the derivative
  in~\cref{eqn:grad_WFnotConverged} can be rewritten as~\cref{eqn:grad_WF}.
  This concludes the proof.
\end{proof}

It can immediately be verified that the gradients
in~\cref{eqn:grad_ltwo,eqn:grad_WF} are identical.
Thus, if the \WF{} iteration converges, it returns a stationary point of the
rational least-squares error with respect to the barycentric weights $\w$
as intended by the iteration.
The same cannot be said for the Levy approximation and \SK{}.
In the following, we will compare these gradients with the one for the
rational least-squares problem to provide further insights into these
approximations.

%%%%%%%%%%%%%%%%%%%%%%%%%%%%%%%%%%%%%%%%

\subsubsection{Analysis of error derivatives for Levy and SK}%
\label{sec:gradAnalysis}

As the expression in~\cref{eqn:grad_SK} is visibly very similar
to~\cref{eqn:grad_ltwo}, we begin our analysis with the \SK{} iteration.
Comparing the two expressions reveals the only difference being that 
in~\cref{eqn:grad_ltwo}, the vector of denominator basis functions
$\dBasis(z_i)$ is weighted by the rational function $r(z_{i}; \w)$, while
in~\cref{eqn:grad_SK} it is weighted by the given data $H(z_{i})$.
Hence, we expect that if the \SK{} iteration results in a rational approximant
$r(z; \w)$, which approximates the given data in $\dataSet$ well in the sense
that
\begin{equation} \label{eqn:ConditionForSKGood}
  \lvert r(z_{i}; \w) - H(z_{i}) \rvert ~\text{is small,} \quad
  \text{for}~i = 1, \ldots, M,
\end{equation}
then $r(z; \w)$ should be nearly optimal in the true, rational
error~\cref{eqn:L2Err_AAA}.
Conversely, if $r(z; \w)$ is an insufficient approximation to the data in
$\dataSet$, i.e., it produces a large approximation error, we would expect 
$r(z; \w)$ to be further away from optimality in the nonlinear
error~\cref{eqn:L2Err_AAA}.

Let us consider these observations in the context of numerical procedures that
construct rational approximations such as \Cref{alg:AAA,alg:AAAA}.
In early iterations, when the degree of the rational approximant is small
and large approximation errors are expected, we would not expect the \SK{}
iteration to yield close-to-optimal approximations.
On the other hand, for larger degree rational approximations, when the
approximation errors are expected to be small so
that~\cref{eqn:ConditionForSKGood} is likely to hold,
we would expect the \SK{} iteration to yield near-optimal approximation results.
Furthermore, we note that these observations yield a strong motivation for the
greedy interpolation step in both \Cref{alg:AAA,alg:AAAA}.
Thereby, the worst case approximation error of the rational approximation
is set to zero in every iteration step, which is highly advantageous for
satisfying the condition~\cref{eqn:ConditionForSKGood}.

Proceeding to the Levy approximation, we see that after some algebraic
manipulations, the derivative in~\cref{eqn:grad_levy} can be rewritten as
\begin{equation} \label{eqn:grad_levyRearanged}
  \dLevydw = \sum_{i = 1}^{M} \lvert d(z_{i}; \w) \rvert^{2}
    \left( \frac{1}{d(z_{i}; \w)} \big( \nBasis(z_{i}) - H(z_{i})
    \dBasis(z_{i}) \big) \overline{\big( r(z_{i}; \w) - H(z_{i}) \big)}\right).
\end{equation}
Comparing~\cref{eqn:grad_levyRearanged} to~\cref{eqn:grad_ltwo} reveals two
major differences:
First, the terms in~\cref{eqn:grad_levyRearanged} are weighted by the
denominator $\lvert d(z_{i}; \w) \rvert^{2}$ and
second, as previously for \SK{}, the vector of denominator basis functions
$\dBasis(z_{i})$ is weighted by $H(z_{i})$ rather than $r(z_{i}; \w)$.

Following the latter point, parts of the analysis for the \SK{} iteration also
hold for the Levy approximation.
If the weights $\w$ obtained via the Levy approximation result in a poor
approximation to the data in $\dataSet$, then we expect that the rational
$r(z; \w)$ approximation is far from optimal.
Beyond that, the weighting by $\lvert d(z_{i}; \w) \rvert^{2}$
in~\cref{eqn:grad_levyRearanged} acts as another error source concerning
optimality, which we now examine in detail.

Let $\w_{\ast}$ be the solution to the linear least-squares
problem~\cref{eqn:LevyL2Prob_Mat}, in other words, the weights that minimize the
Levy error criterion~\cref{eqn:LevyErrCrit}.
In the case that $\lvert d(z_{i}; \w_{\ast}) \rvert \equiv d \in \C$, for
$i = 1, 2, \ldots, M$, then we have that
\begin{equation}
  \zeros = \left. \dLevydw \right\rvert_{\w = \w_{\ast}} =
    \left. M \lvert d \rvert^{2} \dSKdw \right|_{\w = \w_{\ast}}.
\end{equation}
Hence, when $\lvert d(z_{i}; \w_{\ast}) \rvert$ is constant, then minimizers of the Levy
approximation are also the minimizers in the \SK{} iteration and both approaches
provide the same level of accuracy.
However, in the case that $\lvert d(z_{i}; \w_{\ast}) \rvert$ is not constant but
varies by several orders of magnitude over the data $\dataSet$, we expect the
minimizers of the Levy approximation to be far from the optimal approximation
in the rational least-squares error~\cref{eqn:L2Err_AAA}.

We are now in a position to justify the differences in the performances
of \AAA{} and \AAAA{} for the two example functions in \Cref{fig:IntroExs}.
As seen in \Cref{fig:ReluXEx}, \AAA{} struggles at iteration $k = 14$
to approximate $\relu(x)$ while \AAAA{} already provides a normalized $\ltwo$
error of less than $10^{-5}$.
In contrast, both \AAA{} and \AAAA{} are capable of constructing suitably
accurate approximations to $\lvert x \rvert$ at iteration $k = 14$;
see~\cref{fig:absXEx}.
As suggested by our analysis, we examine the variation in the denominator of
function $d(z; \w)$ constructed via \AAA{} for $\lvert x \rvert$ and $\relu(x)$
at iteration $14$ over the real approximation interval $[-1, 1]$ shown in
\Cref{fig:IntroExsDenoms}.
In \Cref{fig:absXExDenom}, we see that while the denominator of the \AAA{}
approximation to $\lvert x \rvert$ is not constant, its variation is constrained
to only two orders of magnitude.
However, on the other side, we see in \Cref{fig:ReluXExDenom} that the
denominator of the \AAA{} approximation for $\relu(x)$ shows nearly $17$ orders
of magnitude in variation.
Correspondingly, the \AAA{} approximation to $\lvert x \rvert$ is nearly as
good as the locally optimal \AAAA{} approximation in \Cref{fig:absXEx}, and the
\AAA{} approximation to $\relu(x)$ is clearly not close to optimal in
\Cref{fig:ReluXEx}.

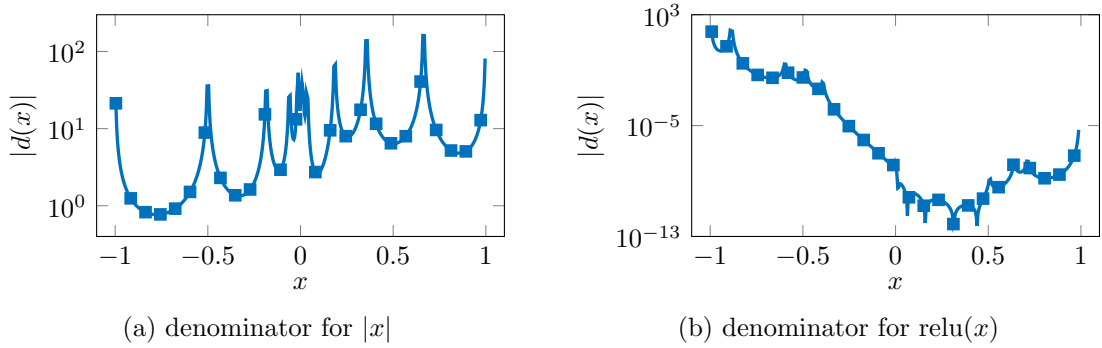
\begin{figure}[t]
  \centering
  \begin{subfigure}[b]{.49\linewidth}
    \centering
  \tikzexternalenable%
  \tikzsetnextfilename{absXDenom}%
  \begin{tikzpicture}[font = \plotfontsize]
  \pgfplotstableread{graphicsPaper/data/introEx_denom_h.csv}\tableERR

  \begin{semilogyaxis}[
    scale only axis,
    width              = .73\linewidth,
    height             = .4\linewidth,
    xmin               = -1.1,
    xmax               = 1.1,
    ymin               = 4e-1,
    ymax               = 3e+2,
    xminorticks        = false,
    yminorticks        = false,
    scaled x ticks     = false,
    xlabel             = {$x$},
    ylabel             = {$\lvert d(x) \rvert$},
    xlabel style       = {yshift = .3em},
    ylabel style       = {yshift = -.3em},
    x tick label style = {/pgf/number format/1000 sep={\,}},
    y tick label style = {/pgf/number format/1000 sep={\,}}
  ]
    \addplot[AAAConverge, mark repeat = {20}] table[x = mu, y = denom]{\tableERR};
  \end{semilogyaxis}
\end{tikzpicture}%
  \tikzexternaldisable%

    \caption{denominator for $\lvert x \rvert$}
    \label{fig:absXExDenom}
  \end{subfigure}%
  \hfill%
  \begin{subfigure}[b]{.49\linewidth}
    \centering
  \tikzexternalenable%
  \tikzsetnextfilename{reluXDenom}%
  \begin{tikzpicture}[font = \plotfontsize]
  \pgfplotstableread{graphicsPaper/data/introEx_denom_g.csv}\tableERR

  \begin{semilogyaxis}[
    scale only axis,
    width              = .73\linewidth,
    height             = .4\linewidth,
    xmin               = -1.1,
    xmax               = 1.1,
    ymin               = 1e-13,
    ymax               = 1e+3,
    xminorticks        = false,
    yminorticks        = false,
    scaled x ticks     = false,
    xlabel             = {$x$},
    ylabel             = {$\lvert d(x) \rvert$},
    xlabel style       = {yshift = .3em},
    ylabel style       = {yshift = -.3em},
    x tick label style = {/pgf/number format/1000 sep={\,}},
    y tick label style = {/pgf/number format/1000 sep={\,}}
  ]
    \addplot[AAAConverge, mark repeat = {20}] table[x = mu, y = denom]{\tableERR};
  \end{semilogyaxis}
\end{tikzpicture}%
  \tikzexternaldisable%

    \caption{denominator for $\relu(x)$}
    \label{fig:ReluXExDenom}
  \end{subfigure}
  
  \caption{Denominator functions of degree $k = 14$ of the \AAA{}
    approximations to $\lvert x \rvert$ and $\relu(x)$ on the
    real interval $[-1, 1]$:
    While the variation of the approximating denominator for $\lvert x \rvert$
    is constrained to only two order of magnitude, the denominator varies
    nearly $17$ orders of magnitude in the approximation of $\relu(x)$.}
  \label{fig:IntroExsDenoms}
\end{figure}

%%%%%%%%%%%%%%%%%%%%%%%%%%%%%%%%%%%%%%%%%%%%%%%%%%%%%%%%%%%%%%%%%%%%%%%%%%%%%%%%

\subsection{Initializations for Whitfield's iteration}
\label{sec:WFInits}

As with all nonlinear optimization methods, a good initialization is essential
for the success of the method.
In this section, we detail our initialization strategy for the \WF{} iteration
in the \AAAA{} algorithm, which ensures monotonic error decay.

It has been observed in~\cite{DesDA06} that in some situations, initializing
the \WF{} iteration with the output of the \SK{} iteraiton can lead to better
convergence of \WF{}.
This is supported by our previous analysis of the gradients in
\Cref{sec:gradAnalysis} in so far that the \SK{} iteration may provide nearly
optimal results in the case of small approximation errors.
We note that since the first step of the \SK{} iteration computes the Levy
approximation, this initialization strategy also implicitly considers the use
of the Levy approximation, in particular when the iterate of \SK{} with the
smallest approximation error is used for the initialization.
While this may work well, we have also observed in several numerical instances
that the weights $\w\AAAiter\skwfIter$ computed during all the \SK{} iteration
steps only increased the nonlinear least-squares error so that
\begin{equation}
  \sum_{i = 1}^{M} \left\lvert H(z_{i}) -
    \frac{\left( \w\AAAiter[k-1] \right)^{\trans} \nBasis\AAAiter[k-1](z_{i})}
    {\left( \w\AAAiter[k-1] \right)^{\trans} \dBasis\AAAiter[k-1](z_{i})}
    \right\rvert^{2} <
  \sum_{i = 1}^{M} \left\lvert H(z_{i}) -
    \frac{\left( \w\AAAiter\skwfIter \right)^{\trans} \nBasis\AAAiter(z_{i})}
    {\left( \w\AAAiter\skwfIter \right)^{\trans} \dBasis\AAAiter(z_{i})}
    \right\rvert^{2},
\end{equation}
for all $p = 0, 1, \ldots, \maxOrderSK$.
To avoid the initialization of \WF{} with such an unintended choice of weights,
we propose to use the following initialization vector in the cases when the
\SK{} iteration does not yield any suitable results:
\begin{equation} \label{eqn:zeroWFInit}
  \bmat{\left(\w\AAAiter[k-1]\right)^{\trans} & 0}^{\trans}.
\end{equation}
These are the weights from the previous \AAAA{} step appended by $0$.
Thus, the initialization~\cref{eqn:zeroWFInit} begins the \WF{} iteration
with a rational function that is equal to the final rational approximation from
the previous iteration step.

While we have observed that one of these two initialization
strategies lead to a suitable decrease in the approximation error in most cases,
it is still potentially possible that the \WF{} iteration does not decrease the
error at any iteration.
In this case, in order to preserve the error monotonicity of the \AAAA{}
method, we set the weights to
\begin{equation} \label{eqn:ChoseZeroWeight}
  \w\AAAiter = \bmat{\left( \w\AAAiter[k-1] \right)^{\trans} & 0}^{\trans}.
\end{equation}
While this choice does preserve monotonicity, it naturally leads to a problem
in the greedy interpolation step, which can cause stagnation of
the \AAAA{} algorithm.
Specifically, if the weights at iteration $k$ are chosen according
to~\cref{eqn:ChoseZeroWeight}, then we are effectively eliminating the
contribution of the $k$-th interpolation point from the error behavior
of the approximation.
In typical scenarios, this leads to the next support point $\lambda_{k + 1}$
to be chosen adjacent or at least close to the previous support
point $\lambda_{k}$ such that $\dBasis\AAAiter[k+1]$ and $\nBasis\AAAiter[k+1]$
are not much more expressive than $\dBasis\AAAiter$ and $\nBasis\AAAiter$
leading to the stagnation of \AAAA{}.
The key to overcoming this issue is to change the error measure for the greedy
selection step whenever the weights are chosen as~\cref{eqn:ChoseZeroWeight}. 
We propose two options here.
First, we recommend switching the original deterministic greedy selection
\begin{equation} \label{eqn:probGreedy}
  (\lambda_{k}, h_{k}) = \argmax\limits_{(z_{i}, H(z_{i})) \in \dataSet}
    \lvert r(z_{i}; \w\AAAiter[k-1]) - H(z_{i}) \rvert
\end{equation}
to a probabilistic greedy selection.
Thereby, we sample $(\lambda_{k}, h_{k})$ from a probability distribution
that is proportional to the approximation error
$\lvert r(z_{i}; \w\AAAiter[k-1]) - H(z_{i}) \rvert$, for
$(z_{i}, H(z_{i})) \in \dataSet$ and $i = 1, 2, \ldots, M$.
A second alternative that we recommend is to change to a relative error
measure, that is, we choose $(\lambda_{k}, h_{k})$ deterministically via
\begin{equation} \label{eqn:relGreedy}
  (\lambda_{k}, h_{k}) = \argmax\limits_{(z_{i}, H(z_{i})) \in \dataSet,\,
    H(z_{i}) \neq 0}
    \frac{\lvert r(z_{i}; \w\AAAiter[k-1]) - H(z_{i}) \rvert}%
    {\lvert H(z_{i}) \rvert}.
\end{equation}
We emphasize that we only use either of these two alternative greedy
selection strategies if the weights $\w\AAAiter$ are chosen according
to~\cref{eqn:ChoseZeroWeight}, i.e., the Levy approximation, \SK{} iteration, and
\WF{} iteration all failed to decrease the rational approximation error.
Only in this scenario, we replace \Cref{alg:AAAA_max} of \Cref{alg:AAAA} with
either the probabilistic greedy step~\cref{eqn:probGreedy} or the relative
error greedy step~\cref{eqn:relGreedy}.
While in general the performance of these alternatives to the classical greedy
step depends on the problem, we have observed that in all considered examples
either method is sufficient to prevent the stagnation of \AAAA{}.

%%%%%%%%%%%%%%%%%%%%%%%%%%%%%%%%%%%%%%%%%%%%%%%%%%%%%%%%%%%%%%%%%%%%%%%%%%%%%%%%
% NUMERICS.                                                                    %
%%%%%%%%%%%%%%%%%%%%%%%%%%%%%%%%%%%%%%%%%%%%%%%%%%%%%%%%%%%%%%%%%%%%%%%%%%%%%%%%

\section{Numerical experiments}%
\label{sec:numerics}

In this section, we demonstrate the performance of our proposed \AAAA{} method
in comparison to the classical \AAA{} on both generic functions and
reduced-order modeling benchmark problems.
The experiments reported here were performed on a 2023 MacBook Pro equipped
with 16\,GB RAM and an Apple M2 Pro chip.
Computations were done in MATLAB 25.1.0.2973910 (R2025a) Update 1 running on
macOS Sequoia 15.6.1.
The source codes, data and results of the numerical experiments are available
at~\cite{supAckBW26}.

%%%%%%%%%%%%%%%%%%%%%%%%%%%%%%%%%%%%%%%%%%%%%%%%%%%%%%%%%%%%%%%%%%%%%%%%%%%%%%%%

\subsection{Experimental setup}%
\label{sec:setup}

For each of the examples presented below, we have generated data sets of
the form
\begin{equation}
  \dataSet = \big\{ (z_{1}, H(z_{1})), (z_{2}, H(z_{2})), \ldots,
    (z_{M}, H(z_{M})) \big\},
\end{equation}
where $H\colon \C \to \C$ denotes the original function that we aim to
approximate.

For the comparison of the \AAA{} and \AAAA{} methods in terms of
approximation quality, we compute and plot normalized versions of the discrete
$\ltwo$ and $\linf$ error measures over the respective data set
$\dataSet$ given by
\begin{equation}
  \frac{\left(\displaystyle \sum\limits_{i = 1}^{M}
    \lvert H(z_{i}) - r(z_{i}) \rvert^{2}\right)^{1/2}}%
    {\left(\displaystyle \sum\limits_{i = 1}^{M} \lvert H(z_{i}) \rvert^{2}\right)^{1/2}}
  \quad\text{and}\quad
  \frac{\displaystyle \max\limits_{(z, H(z)) \in \dataSet}
    \lvert H(z) - r(z) \rvert}%
    {\displaystyle \max_{(z, H(z)) \in \dataSet} \lvert H(z) \rvert}.
\end{equation}

For \AAAA{}, in the steps in which the refinement strategies
(\SK{} and \WF{} iterations) fail to decrease the approximation error,
we are changing to the randomized greedy selection as outlined in
\Cref{sec:WFInits}.
The supplemental code~\cite{supAckBW26} also provides the option to change
to the relative error measures as the alternative support point selection
criterion.

%%%%%%%%%%%%%%%%%%%%%%%%%%%%%%%%%%%%%%%%%%%%%%%%%%%%%%%%%%%%%%%%%%%%%%%%%%%%%%%%

\subsection{Classical function approximations}%
\label{sec:genFuncs}

As the first test of our proposed method, we aim to approximate two
more generic nonlinear functions, namely $\lvert \sin(3 \pi x) \rvert$ and
$\triWave(x)$ (defined below),
via rational functions similar to the motivating examples presented in
\Cref{sec:intro}.
These two functions are chosen as they distill certain features, which
lead to unsatisfactory convergence behavior of the classical \AAA{} method.
On the other hand, the proposed \AAAA{} does not seem to be negatively affected
by these features.
Furthermore, these functions include sharp, non-differentiable edges that can
be also seen in model order reduction benchmarks; see \Cref{sec:ROMBenchmarks}
for further details.

\begin{figure}[t]
  \centering
  \begin{subfigure}[b]{.49\linewidth}
    \centering
  \tikzexternalenable%
  \tikzsetnextfilename{AbsSin_converge}%
  \begin{tikzpicture}[font = \plotfontsize]
  \pgfplotstableread{graphicsPaper/data/genericFuns_conv_f1.csv}\tableERR

  \begin{semilogyaxis}[
    scale only axis,
    width              = .73\linewidth,
    height             = .4\linewidth,
    xmin               = 0,
    xmax               = 50,
    ymin               = 1e-4,
    ymax               = 2e+2,
    xminorticks        = false,
    yminorticks        = true,
    scaled x ticks     = false,
    xlabel             = {degree $k$},
    ylabel             = {normalized $\ltwo$ error},
    xlabel style       = {yshift = .3em},
    ylabel style       = {yshift = -.3em},
    x tick label style = {/pgf/number format/1000 sep={\,}},
    y tick label style = {/pgf/number format/1000 sep={\,}}
  ]
    \addplot[AAAConverge] table[x = iter, y = AAA]{\tableERR};
    \addplot[AAAAConverge] table[x = iter, y = NLAAA]{\tableERR};
  \end{semilogyaxis}
\end{tikzpicture}%
  \tikzexternaldisable%

    \caption{convergence for $\lvert \sin(3 \pi x) \rvert$}
    \label{fig:absSin_conv}
  \end{subfigure}%
  \hfill%
  \begin{subfigure}[b]{.49\linewidth}
    \centering
  \tikzexternalenable%
  \tikzsetnextfilename{triWave_converge}%
  \begin{tikzpicture}[font = \plotfontsize]
  \pgfplotstableread{graphicsPaper/data/genericFuns_conv_f2.csv}\tableERR

  \begin{semilogyaxis}[
    scale only axis,
    width              = .73\linewidth,
    height             = .4\linewidth,
    xmin               = 0,
    xmax               = 50,
    ymin               = 5e-3,
    ymax               = 6e+1,
    xminorticks        = false,
    yminorticks        = true,
    scaled x ticks     = false,
    xlabel             = {degree $k$},
    ylabel             = {normalized $\ltwo$ error},
    xlabel style       = {yshift = .3em},
    ylabel style       = {yshift = -.3em},
    x tick label style = {/pgf/number format/1000 sep={\,}},
    y tick label style = {/pgf/number format/1000 sep={\,}}
  ]
    \addplot[AAAConverge] table[x = iter, y = AAA]{\tableERR};
    \addplot[AAAAConverge] table[x = iter, y = NLAAA]{\tableERR};
  \end{semilogyaxis}
\end{tikzpicture}%
  \tikzexternaldisable%

    \caption{convergence for $\triWave(x)$}
    \label{fig:trieWave_conv}
  \end{subfigure}%

  \vspace{.5\baselineskip}
  \begin{subfigure}[b]{.49\linewidth}
    \centering
  \tikzexternalenable%
  \tikzsetnextfilename{AbsSin_plot}%
  \begin{tikzpicture}[font = \plotfontsize]
  \pgfplotstableread{graphicsPaper/data/genericFuns_eval_f1.csv}\tableRESP

  \begin{axis}[
    scale only axis,
    width              = .73\linewidth,
    height             = .4\linewidth,
    xmin               = -1,
    xmax               = 1,
    ymin               = -0.1,
    ymax               = 1.1,
    xminorticks        = true,
    yminorticks        = true,
    scaled x ticks     = false,
    clip mode          = individual,
    xlabel             = {$x$},
    ylabel             = {$r(x)$},
    xlabel style       = {yshift = .3em},
    ylabel style       = {yshift = -.3em},
    x tick label style = {/pgf/number format/1000 sep={\,}},
    y tick label style = {/pgf/number format/1000 sep={\,}}
  ]
    \addplot[trueData] table[x = mu, y = g]{\tableRESP};
    \addplot[AAAResponse] table[x = mu, y = AAA]{\tableRESP};
    \addplot[AAAAResponse] table[x = mu, y = NLAAA]{\tableRESP};
  \end{axis}
\end{tikzpicture}%
  \tikzexternaldisable%

    \caption{degree-$50$ approximations for $\lvert \sin(3 \pi x) \rvert$}
    \label{fig:AbsSin_plot}
  \end{subfigure}%
  \hfill%
  \begin{subfigure}[b]{.49\linewidth}
    \centering
  \tikzexternalenable%
  \tikzsetnextfilename{triWave_plot}%
  \begin{tikzpicture}[font = \plotfontsize]
  \pgfplotstableread{graphicsPaper/data/genericFuns_eval_f2.csv}\tableRESP

  \begin{axis}[
    scale only axis,
    width              = .73\linewidth,
    height             = .4\linewidth,
    xmin               = -1,
    xmax               = 1,
    ymin               = -0.1,
    ymax               = 1.1,
    xminorticks        = true,
    yminorticks        = true,
    scaled x ticks     = false,
    clip mode          = individual,
    xlabel             = {$x$},
    ylabel             = {$r(x)$},
    xlabel style       = {yshift = .3em},
    ylabel style       = {yshift = -.3em},
    x tick label style = {/pgf/number format/1000 sep={\,}},
    y tick label style = {/pgf/number format/1000 sep={\,}}
  ]
    \addplot[trueData] table[x = mu, y = g]{\tableRESP};
    \addplot[AAAResponse] table[x = mu, y = AAA]{\tableRESP};
    \addplot[AAAAResponse] table[x = mu, y = NLAAA]{\tableRESP};
  \end{axis}
\end{tikzpicture}%
  \tikzexternaldisable%

    \caption{degree-$50$ approximations $\triWave(x)$}
    \label{fig:triWave_plot}
  \end{subfigure}%
  
  \vspace{.5\baselineskip}
  \tikzexternalenable%
  \tikzsetnextfilename{LegendResults}%
  \begin{tikzpicture}
  \begin{axis}[%
    hide axis,
    scale only axis,
    width  = 1cm,
    height = 1cm,
    xmin   = 0,
    xmax   = 1,
    ymin   = 0,
    ymax   = 1,
    legend columns    = -1,
    legend cell align = {left},
    legend style      = {
      at     = {(0,0)},
      anchor = center,
      /tikz/every even column/.append style = {column sep = 0.0cm}}
  ]
    \addlegendimage{trueData} coordinates {(0, 0)};
    \addlegendentry{data\phantom{Pp}}

    \addlegendimage{AAAConverge}
    \addlegendentry{/}
    \addlegendimage{AAAResponse}
    \addlegendentry{\AAA{}\phantom{Pp}}

    \addlegendimage{AAAAConverge}
    \addlegendentry{/}
    \addlegendimage{AAAAResponse}
    \addlegendentry{\AAAA}
  \end{axis}
\end{tikzpicture}%
  \tikzexternaldisable%

  \caption{Convergence and approximation results of \AAA{} and \AAAA{} for the
    functions $\lvert \sin(3\pi x) \rvert$ and $\triWave(x)$:
    For both example functions, the classical \AAA{} has a very unsteady error
    convergence while the proposed \AAAA{} provides a monotonic decay.
    For the $\triWave$ function, the final approximation by \AAA{} shows visible
    deviations from the given data while \AAAA{} provides an indistinguishable
    approximation.}
  \label{fig:ConvAndErr_generic}
\end{figure}
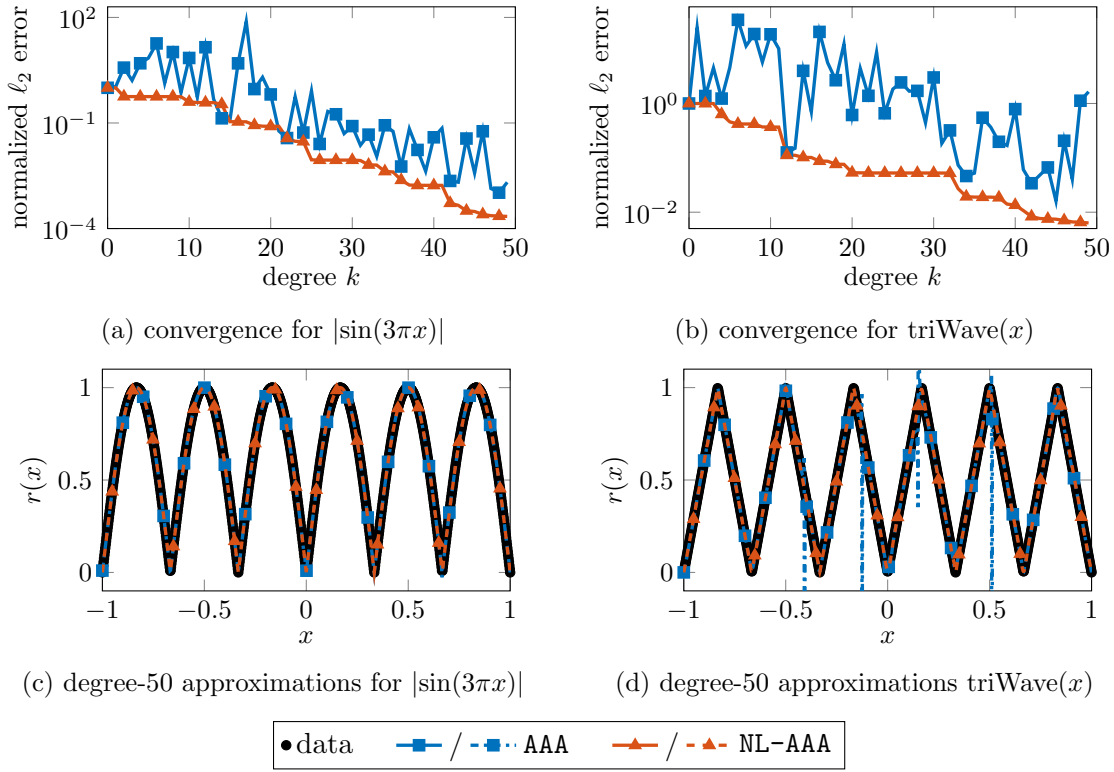

Let us first consider the approximation of $\lvert \sin(3 \pi x) \rvert$
in the real interval $x \in [-1, 1]$.
As data, we have sampled the function in $1\,000$ real, linearly equidistant points
in the considered interval and used this data to compute rational
approximations via \AAA{} and \AAAA{} up to degree $50$.
The convergence of the two methods in the normalized (squared) $\ltwo$-norm
is shown in \Cref{fig:absSin_conv}.
We can see that while the error of the \AAA{} approximation tends downwards,
it is far from monotonic and has large variations in the magnitude of the
error that can be observed as spikes in the plot.
In contrast, the \AAAA{} approximation error monotonically decreases over the
iterations and has smaller errors than \AAA{} in nearly every step.
For degree $50$, both methods yield a suitable approximation
to the given function $f(x) = \lvert \sin(3 \pi x) \rvert$  as can be seen
in \Cref{fig:AbsSin_plot}.

In general, we have observed that \AAA{} tends to struggle with its error
convergence and approximation quality when the function underlying the given
data is less smooth and has points at which the derivative changes rapidly
or does not exist.
Another function of that type is the triangular wave.
This function is defined on the interval $x \in [-1, 1]$ via
\begin{equation}
  \triWave(x) = 2 \left\lvert 3 x - 
    \left\lfloor 3 x + \frac{1}{2} \right\rfloor \right\rvert,
\end{equation}
where $\lfloor . \rfloor$ denotes the integer floor function.
In contrast to the previous function $\lvert \sin(3 \pi x) \rvert$, the
triangular wave has double the amount of non-differentiable points in the
interval~$[-1, 1]$.
This particular triangular wave can also be obtained via the piecewise linear
interpolation of $\lvert \sin(3 \pi x) \rvert$.

As for the previous function, we sample $\triWave(x)$ in $1\,000$ real, linearly
equidistant points in the interval $[-1, 1]$ to construct our data, and then
we use \AAA{} and \AAAA{} to construct rational approximations up to
degree $50$.
In \Cref{fig:trieWave_conv}, we see that this time the errors of \AAA{}
and \AAAA{} lie even further apart.
As for the previous example, we see that the error behavior of \AAA{} yields
many rapid changes.
However, this time no general convergence trend is visible and most of the
approximations constructed via \AAA{} up to degree $50$ yield a normalized
$\ltwo$ error that is larger than $1$.
The degree-$50$ approximations in \Cref{fig:triWave_plot} show that in several
points, the \AAA{} approximation strongly deviates from the given data.
On the other hand, the proposed \AAAA{} algorithm provides again a monotonic
error decay and the degree-$50$ approximation is indistinguishable from the
given data.

%%%%%%%%%%%%%%%%%%%%%%%%%%%%%%%%%%%%%%%%%%%%%%%%%%%%%%%%%%%%%%%%%%%%%%%%%%%%%%%%

\subsection{Model order reduction benchmark examples}%
\label{sec:ROMBenchmarks}

The previous numerical examples have indicated that functions with sharp edges
and non-differentiable points pose a larger challenge for the classical \AAA{}
algorithm compared to our proposed \AAAA{} method.
For many model order reduction examples, similar behaviors can be observed with
data given close to non-differentiable points with quickly varying function
values.
In the following, we compare the \AAA{} and \AAAA{} algorithms on two
established model order reduction examples from the literature.
As we consider here the underlying dynamical systems rather than the
corresponding rational functions, we denote the complexity of the results by
the order of the underlying model, which is the degree of the corresponding
rational transfer function plus one.

%%%%%%%%%%%%%%%%%%%%%%%%%%%%%%%%%%%%%%%%

\subsubsection{International space station}%
\label{sec:rom_iss}

The first model order reduction example we consider is a dynamical system model
of stage 12A of the international space station
(ISS)~\cite{GugAB01, morwiki_iss}.
The transfer function corresponding to the dynamical system is a
matrix-valued rational function of degree $1\,412$.
As we consider scalar functions in this work, we only take the upper left
entries of these matrix-valued transfer functions.
This models the input-to-output behavior of the system from the first system
input to the first observed system output.
To generate data for this example, we sample the transfer function at $1\,000$
logarithmically equidistant points on the imaginary acis in the interval
$[10^{-1}, 10^{3}] \imunit$.
This range captures the main behavior of the system's frequency response.
The magnitudes of the sampled data points are shown in \Cref{fig:ISS_resp}.

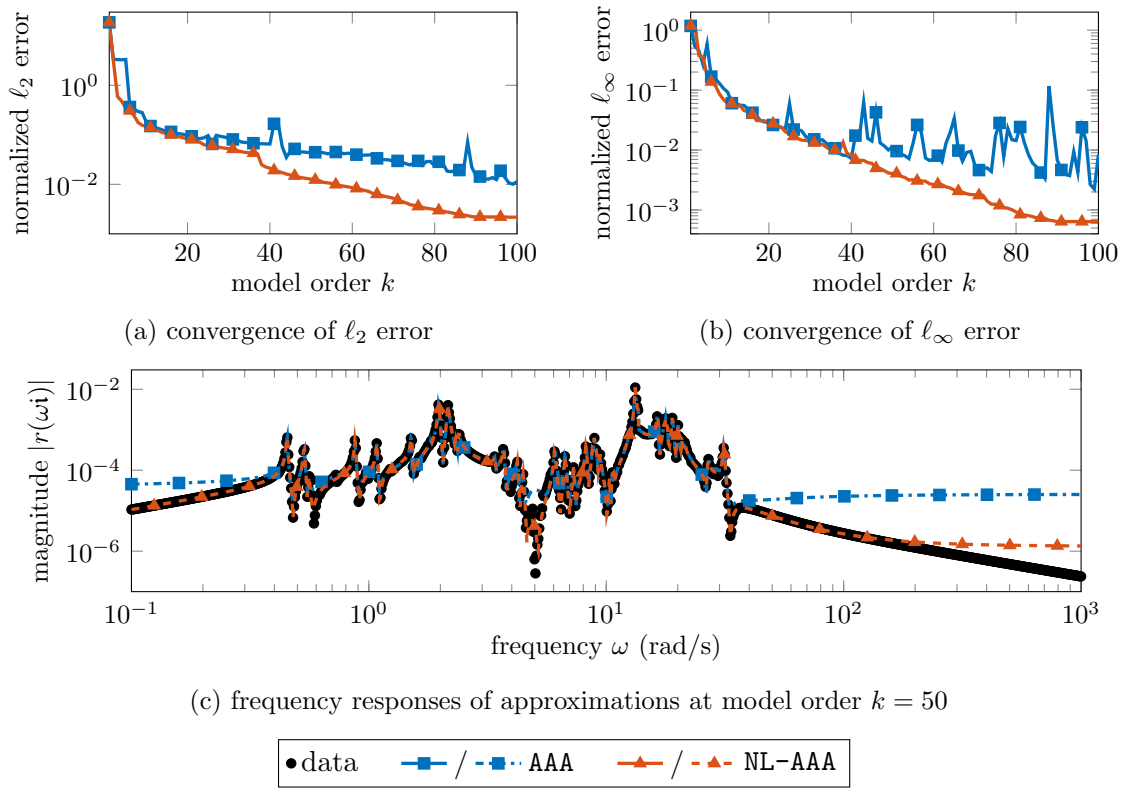
\begin{figure}[t]
  \centering
  \begin{subfigure}[b]{.49\linewidth}
    \centering
  \tikzexternalenable%
  \tikzsetnextfilename{ISS_conv_L2}%
  \begin{tikzpicture}[font = \plotfontsize]
  \pgfplotstableread{graphicsPaper/data/ISS_convergence_L2.csv}\tableERR

  \begin{semilogyaxis}[
    scale only axis,
    width              = .73\linewidth,
    height             = .4\linewidth,
    xmin               = 1,
    xmax               = 100,
    ymin               = 1e-3,
    ymax               = 3e+1,
    xminorticks        = false,
    yminorticks        = true,
    scaled x ticks     = false,
    xlabel             = {model order $k$},
    ylabel             = {normalized $\ltwo$ error},
    xlabel style       = {yshift = .3em},
    ylabel style       = {yshift = -.3em},
    x tick label style = {/pgf/number format/1000 sep={\,}},
    y tick label style = {/pgf/number format/1000 sep={\,}}
  ]
    \addplot[AAAConverge, mark repeat = {5}] table[x = iter, y = AAA]{\tableERR};
    \addplot[AAAAConverge, mark repeat = {5}] table[x = iter, y = AAAA]{\tableERR};
  \end{semilogyaxis}
\end{tikzpicture}%
  \tikzexternaldisable%

    \caption{convergence of $\ltwo$ error}
    \label{fig:ISS_convL2}
  \end{subfigure}%
  \hfill%
  \begin{subfigure}[b]{.49\linewidth}
    \centering
  \tikzexternalenable%
  \tikzsetnextfilename{ISS_conv_Linf}%
  \begin{tikzpicture}[font = \plotfontsize]
  \pgfplotstableread{graphicsPaper/data/ISS_convergence_Linf.csv}\tableERR

  \begin{semilogyaxis}[
    scale only axis,
    width              = .73\linewidth,
    height             = .4\linewidth,
    xmin               = 1,
    xmax               = 100,
    ymin               = 4e-4,
    ymax               = 2e+0,
    xminorticks        = false,
    yminorticks        = true,
    scaled x ticks     = false,
    xlabel             = {model order $k$},
    ylabel             = {normalized $\linf$ error},
    xlabel style       = {yshift = .3em},
    ylabel style       = {yshift = -.3em},
    x tick label style = {/pgf/number format/1000 sep={\,}},
    y tick label style = {/pgf/number format/1000 sep={\,}}
  ]
    \addplot[AAAConverge, mark repeat = {5}] table[x = iter, y = AAA]{\tableERR};
    \addplot[AAAAConverge, mark repeat = {5}] table[x = iter, y = AAAA]{\tableERR};
  \end{semilogyaxis}
\end{tikzpicture}%
  \tikzexternaldisable%

    \caption{convergence of $\linf$ error}
    \label{fig:ISS_conv_inf}
  \end{subfigure}

  \vspace{.5\baselineskip}
  \begin{subfigure}[b]{.98\linewidth}
    \centering
  \tikzexternalenable%
  \tikzsetnextfilename{ISS_freqPlot_wide}%
  \begin{tikzpicture}[font = \plotfontsize]
  \pgfplotstableread{graphicsPaper/data/ISS_frequencyResponse.csv}\tableRESP

  \begin{loglogaxis}[
    scale only axis,
    width              = .85\linewidth,
    height             = .2\linewidth,
    xmin               = 1e-1,
    xmax               = 1e+3,
    ymin               = 1e-7,
    ymax               = 3e-2,
    xminorticks        = true,
    yminorticks        = true,
    scaled x ticks     = false,
    clip mode          = individual,
    xlabel             = {frequency $\omega$ (rad/s)},
    ylabel             = {magnitude $\lvert r(\omega \imunit) \rvert$},
    xlabel style       = {yshift = .3em},
    ylabel style       = {yshift = -.3em},
    x tick label style = {/pgf/number format/1000 sep={\,}},
    y tick label style = {/pgf/number format/1000 sep={\,}}
  ]
    \addplot[trueData] table[x = mu, y = g]{\tableRESP};
    \addplot[AAAResponse] table[x = mu, y = AAA]{\tableRESP};
    \addplot[AAAAResponse] table[x = mu, y = AAAA]{\tableRESP};
  \end{loglogaxis}
\end{tikzpicture}%
  \tikzexternaldisable%

    \caption{frequency responses of approximations at model order $k = 50$}
    \label{fig:ISS_resp}
  \end{subfigure}
  
  \vspace{.5\baselineskip}
  \tikzexternalenable%
  \tikzsetnextfilename{LegendResults}%
  \begin{tikzpicture}
  \begin{axis}[%
    hide axis,
    scale only axis,
    width  = 1cm,
    height = 1cm,
    xmin   = 0,
    xmax   = 1,
    ymin   = 0,
    ymax   = 1,
    legend columns    = -1,
    legend cell align = {left},
    legend style      = {
      at     = {(0,0)},
      anchor = center,
      /tikz/every even column/.append style = {column sep = 0.0cm}}
  ]
    \addlegendimage{trueData} coordinates {(0, 0)};
    \addlegendentry{data\phantom{Pp}}

    \addlegendimage{AAAConverge}
    \addlegendentry{/}
    \addlegendimage{AAAResponse}
    \addlegendentry{\AAA{}\phantom{Pp}}

    \addlegendimage{AAAAConverge}
    \addlegendentry{/}
    \addlegendimage{AAAAResponse}
    \addlegendentry{\AAAA}
  \end{axis}
\end{tikzpicture}%
  \tikzexternaldisable%

  \caption{Convergence and approximation results of \AAA{} and \AAAA{}
    for the transfer function of the international space station model:
    The proposed \AAAA{} provides a monotonic convergence in the $\ltwo$
    error but also convinces in the $\linf$ error.
    For larger model orders, the error of the classical \AAA{} is visibly larger
    than for \AAAA{} in both error metrics.}
  \label{fig:ConvAndErr_ISS}
\end{figure}

Convergence of \AAA{} and \AAAA{} in the normalized $\ltwo$ norm
is shown in \Cref{fig:ISS_convL2}.
Due to its importance for the modeling of dynamical systems, we also show the
corresponding convergence in the $\linf$ norm in \Cref{fig:ISS_conv_inf}.
While initially as good as \AAAA{}, the classical \AAA{} algorithm diverges in accuracy
in both error metrics after model order $k = 40$.
This discrepancy increases for the remainder of the model orders displayed
until the final iteration $k = 100$, where the \AAAA{} approximation has
a normalized $\ltwo$ error of $0.0014$ while \AAA{} provides an error of
$0.0113$.
Thus, \AAAA{} performs one order of magnitude better in terms of accuracy for
the final approximation size.
As discussed in \Cref{sec:WFInits}, we see that \AAAA{} decreases the $\ltwo$
error monotonically in \Cref{fig:ISS_convL2}.
While the same claim does not hold for the $\linf$ norm, we see in
\Cref{fig:ISS_conv_inf} that the $\linf$ error of the \AAAA{} approximation
decreases monotonically nearly everywhere.

We emphasize that in model order reduction, the construction of highly accurate
models of smallest order is critical.
Furthermore, in many applications, a normalized $\ltwo$ error of about $1$\%
is considered as sufficiently accurate.
The proposed \AAAA{} achieves this target accuracy already at iteration
$k = 56$, while \AAA{} does not reach that accuracy even at iteration $100$.
In \Cref{fig:ISS_resp}, the frequency responses of the two approximations at
model order $k = 50$ are shown together with the sampled data.
We can see the qualitatively better fit provided by \AAAA{}.
While both approaches are able to accurately match the peaks in the data, the
data in between these peaks is visibly better matched by \AAAA{}.

%%%%%%%%%%%%%%%%%%%%%%%%%%%%%%%%%%%%%%%%

\subsubsection{Vibrating plate}%
\label{sec:rom_plate}

As a second model order reduction example, we consider the vibrational response
of a strutted plate from~\cite{AumW23, ReiW25}.
This plate has been equipped with a set of tuned vibration absorbers, which
dampen the response of the plate around the frequency $48$\,Hz.
The vibrations are measured as root mean squared displacement of the internal
area of the plate in the vertical direction.
The data for this model are transfer function samples given at linearly
equidistant points in the interval $[1, 250] \cdot 2\pi \imunit$.
Since the transfer function of this model is a real-valued function on the
complete complex plane, it is not analytic in any open subset of the complex
plane.
We use the two algorithms, \AAA{} and \AAAA{}, to compute approximations to
the sampled data up to model order $60$.

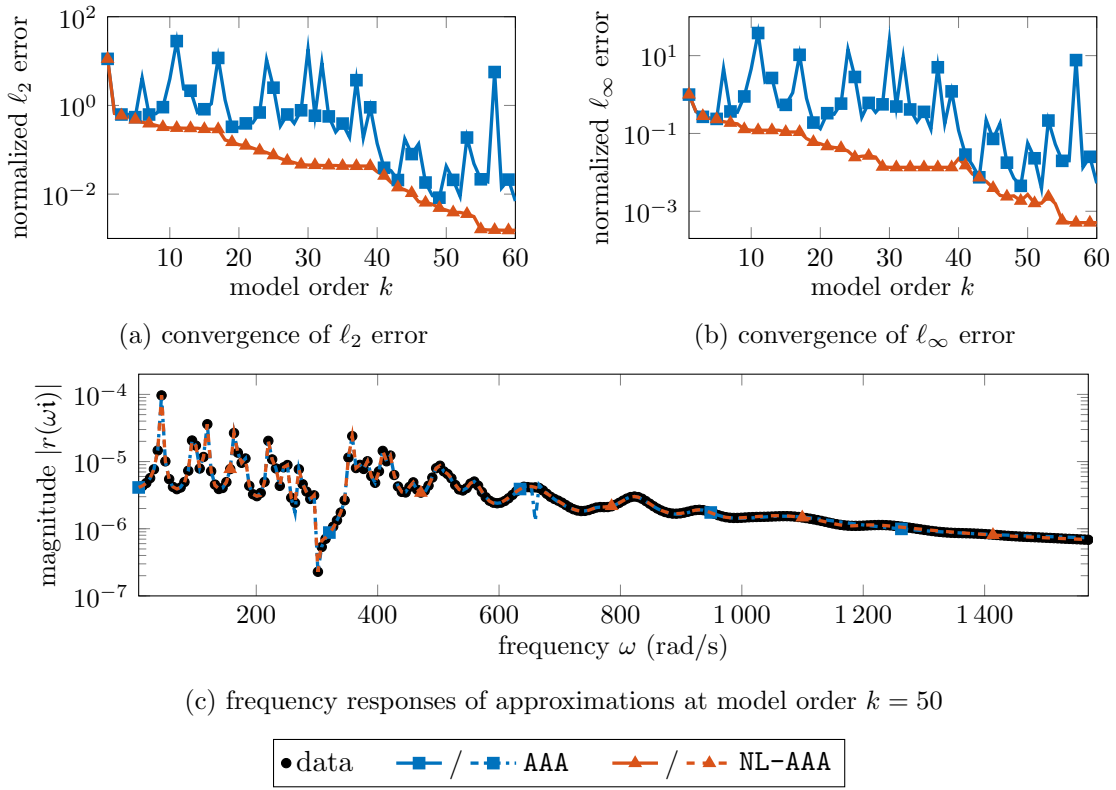
\begin{figure}[t]
  \centering
  \begin{subfigure}[b]{.49\linewidth}
    \centering
  \tikzexternalenable%
  \tikzsetnextfilename{VibePlate_conv_L2}%
  \begin{tikzpicture}[font = \plotfontsize]
  \pgfplotstableread{graphicsPaper/data/VibePlate_convergence_L2.csv}\tableERR

  \begin{semilogyaxis}[
    scale only axis,
    width              = .73\linewidth,
    height             = .4\linewidth,
    xmin               = 1,
    xmax               = 60,
    ymin               = 1e-3,
    ymax               = 1e+2,
    xminorticks        = false,
    yminorticks        = true,
    scaled x ticks     = false,
    xlabel             = {model order $k$},
    ylabel             = {normalized $\ltwo$ error},
    xlabel style       = {yshift = .3em},
    ylabel style       = {yshift = -.3em},
    x tick label style = {/pgf/number format/1000 sep={\,}},
    y tick label style = {/pgf/number format/1000 sep={\,}}
  ]
    \addplot[AAAConverge] table[x = iter, y = AAA]{\tableERR};
    \addplot[AAAAConverge] table[x = iter, y = AAAA]{\tableERR};
  \end{semilogyaxis}
\end{tikzpicture}%
  \tikzexternaldisable%

    \caption{convergence of $\ltwo$ error}
    \label{fig:VibePlate_convL2}
  \end{subfigure}%
  \hfill%
  \begin{subfigure}[b]{.49\linewidth}
    \centering
  \tikzexternalenable%
  \tikzsetnextfilename{VibePlate_conv_Linf}%
  \begin{tikzpicture}[font = \plotfontsize]
  \pgfplotstableread{graphicsPaper/data/VibePlate_convergence_Linf.csv}\tableERR

  \begin{semilogyaxis}[
    scale only axis,
    width              = .73\linewidth,
    height             = .4\linewidth,
    xmin               = 1,
    xmax               = 60,
    ymin               = 2e-4,
    ymax               = 1e+2,
    xminorticks        = false,
    yminorticks        = true,
    scaled x ticks     = false,
    xlabel             = {model order $k$},
    ylabel             = {normalized $\linf$ error},
    xlabel style       = {yshift = .3em},
    ylabel style       = {yshift = -.3em},
    x tick label style = {/pgf/number format/1000 sep={\,}},
    y tick label style = {/pgf/number format/1000 sep={\,}}
  ]
    \addplot[AAAConverge] table[x = iter, y = AAA]{\tableERR};
    \addplot[AAAAConverge] table[x = iter, y = AAAA]{\tableERR};
  \end{semilogyaxis}
\end{tikzpicture}%
  \tikzexternaldisable%

    \caption{convergence of $\linf$ error}
    \label{fig:VibePlate_convLinf}
  \end{subfigure}

  \vspace{.5\baselineskip}
  \begin{subfigure}[b]{.98\linewidth}
    \centering
  \tikzexternalenable%
  \tikzsetnextfilename{VibePlate_freqPlot_wide}%
  \begin{tikzpicture}[font = \plotfontsize]
  \pgfplotstableread{graphicsPaper/data/VibePlate_frequencyResponse.csv}\tableRESP

  \begin{semilogyaxis}[
    scale only axis,
    width              = .85\linewidth,
    height             = .2\linewidth,
    xmin               = 6.283185307179586,
    xmax               = 1.570796326794896e+3,
    ymin               = 1e-7,
    ymax               = 2e-4,
    xminorticks        = true,
    yminorticks        = true,
    scaled x ticks     = false,
    clip mode          = individual,
    xlabel             = {frequency $\omega$ (rad/s)},
    ylabel             = {magnitude $\lvert r(\omega \imunit) \rvert$},
    xlabel style       = {yshift = .3em},
    ylabel style       = {yshift = -.3em},
    x tick label style = {/pgf/number format/1000 sep={\,}},
    y tick label style = {/pgf/number format/1000 sep={\,}}
  ]
    \addplot[trueData] table[x = mu, y = g]{\tableRESP};
    \addplot[AAAResponse] table[x = mu, y = AAA]{\tableRESP};
    \addplot[AAAAResponse] table[x = mu, y = AAAA]{\tableRESP};
  \end{semilogyaxis}
\end{tikzpicture}%
  \tikzexternaldisable%

    \caption{frequency responses of approximations at model order $k = 50$}
    \label{fig:VibePlate_resp}
  \end{subfigure}
  
  \vspace{.5\baselineskip}
  \tikzexternalenable%
  \tikzsetnextfilename{LegendResults}%
  \begin{tikzpicture}
  \begin{axis}[%
    hide axis,
    scale only axis,
    width  = 1cm,
    height = 1cm,
    xmin   = 0,
    xmax   = 1,
    ymin   = 0,
    ymax   = 1,
    legend columns    = -1,
    legend cell align = {left},
    legend style      = {
      at     = {(0,0)},
      anchor = center,
      /tikz/every even column/.append style = {column sep = 0.0cm}}
  ]
    \addlegendimage{trueData} coordinates {(0, 0)};
    \addlegendentry{data\phantom{Pp}}

    \addlegendimage{AAAConverge}
    \addlegendentry{/}
    \addlegendimage{AAAResponse}
    \addlegendentry{\AAA{}\phantom{Pp}}

    \addlegendimage{AAAAConverge}
    \addlegendentry{/}
    \addlegendimage{AAAAResponse}
    \addlegendentry{\AAAA}
  \end{axis}
\end{tikzpicture}%
  \tikzexternaldisable%

  \caption{Convergence and approximation results of \AAA{} and \AAAA{}
    for the transfer function of the vibrating plate:
    The proposed \AAAA{} method provides a smooth and reliable decaying error
    behavior in both considered error metrics, while the classical \AAA{}
    has rapid changes in its approximation quality with an overall larger
    error.
    For model order $50$, the frequency response shows that \AAA{} introduces
    localized inaccuracies while \AAAA{} fits the data without visible
    differences.}
  \label{fig:ConvAndErr_VibePlate}
\end{figure}

The results of the computations can be seen in \Cref{fig:ConvAndErr_VibePlate}.
The normalized errors in \Cref{fig:VibePlate_convL2,fig:VibePlate_convLinf}
show the differences in the convergence behavior between \AAA{} and \AAAA{}.
As expected from the previous generic function examples, the classical \AAA{}
method struggles with the data coming from a
function with several non-analytic points compared to the previous model
reduction benchmark.
This can be seen in \Cref{fig:VibePlate_convL2,fig:VibePlate_convLinf} by the
large error deviations between iterations in either error metric.
In contrast, \AAAA{} provides the guaranteed monotonic error behavior in the
normalized $\ltwo$ norm and a similarly smooth and reliable error convergence in
the $\linf$ norm.
We can also see that overall the error of \AAAA{} is at least as good as \AAA{}
but significantly smaller for most approximation orders.

\Cref{fig:VibePlate_resp} shows the frequency response of the approximations
computed via \AAA{} and \AAAA{} for the model order $k = 50$ in comparison
to the given data.
For this example, we see that the larger error of the \AAA{} approximation stems
from localized inaccuracies close to the given data, e.g., at about
$630\imunit$,
there is a large, visible spike in the \AAA{} model that is not present in the
original data.
Similar to the examples in \Cref{sec:gradAnalysis}, we have observed that the
denominator of the \AAA{} approximation largely varies up to five order of
magnitude over the approximation range, which may explain the inaccuracies.
In contrast, we can see in \Cref{fig:VibePlate_resp} that the \AAAA{}
approximation accurately matches the given data.

%%%%%%%%%%%%%%%%%%%%%%%%%%%%%%%%%%%%%%%%%%%%%%%%%%%%%%%%%%%%%%%%%%%%%%%%%%%%%%%%
% CONCLUSIONS.                                                                 %
%%%%%%%%%%%%%%%%%%%%%%%%%%%%%%%%%%%%%%%%%%%%%%%%%%%%%%%%%%%%%%%%%%%%%%%%%%%%%%%%

\section{Conclusions}%
\label{sec:conclusions}

We introduced a new greedy approach for the construction of rational
approximations for given data.
Similar to the classical \AAA{} algorithm, the proposed method greedily selects
barycentric interpolation points based on an $\linf$ measure and subsequently
fits the barycentric weights to approximate the remaining data.
In contrast to the classical approach, we propose to use efficient refinement
procedures to solve the true rational least-squares problem to fit the
remaining data rather than fitting only a linearization.
The design of our algorithm is supported by the theoretical analysis of the
gradients used in the different minimization approaches, which provides a more
reliable and monotonic error behavior compared to \AAA{}.
This has been practically verified via numerical experiments using classical
problems from function approximation as well as model order reduction
benchmarks.

%%%%%%%%%%%%%%%%%%%%%%%%%%%%%%%%%%%%%%%%%%%%%%%%%%%%%%%%%%%%%%%%%%%%%%%%%%%%%%%%
% *** ACKNOWLEDGEMENTS ***                                                     %
%%%%%%%%%%%%%%%%%%%%%%%%%%%%%%%%%%%%%%%%%%%%%%%%%%%%%%%%%%%%%%%%%%%%%%%%%%%%%%%%

\section*{Acknowledgments}%
\addcontentsline{toc}{section}{Acknowledgments}

The work of Ackermann was supported in part by the
Simons Dissertation Fellowship in Mathematics. The works of Balicki and Gugercin were supported in part by 
US National Science Foundation grant DMS-2411141.

%%%%%%%%%%%%%%%%%%%%%%%%%%%%%%%%%%%%%%%%%%%%%%%%%%%%%%%%%%%%%%%%%%%%%%%%%%%%%%%%
% *** REFERENCES ***                                                           %
%%%%%%%%%%%%%%%%%%%%%%%%%%%%%%%%%%%%%%%%%%%%%%%%%%%%%%%%%%%%%%%%%%%%%%%%%%%%%%%%

\addcontentsline{toc}{section}{References}
\bibliographystyle{plainurl}
\bibliography{bibtex/myref}

\end{document}